%
%
\documentclass[12pt]{amsart} 
 
\usepackage{amssymb,amsmath,amsfonts,mathrsfs,mathtools} 
\usepackage{color} 
\usepackage{mathdots} 
%
%
\headheight=8pt      \topmargin=0pt 
\textheight=631pt    \textwidth=456pt 
\oddsidemargin=6pt   \evensidemargin=6pt 
 
\newtheorem{thm}{Theorem}[section] 
\newtheorem{prop}[thm]{Proposition} 
\newtheorem{lemma}[thm]{Lemma} 
\newtheorem{cor}[thm]{Corollary} 
\newtheorem{ass}[thm]{Assertion} 
 
\theoremstyle{definition} 
 
\newtheorem{example}[thm]{Example} 
 
\newtheorem{remark}[thm]{Remark} 
 
\newtheorem{prob}[thm]{Problem} 
 
\newtheorem{conjecture}[thm]{Conjecture} 
 
\numberwithin{equation}{section} 
 

\newcommand{\bull}{{\scriptscriptstyle \bullet}} 
\newcommand{\ibull}{i_\bull} 
\newcommand{\jbull}{j_\bull} 
\newcommand{\CC}{{\mathbb C}} 
\newcommand{\PP}{{\mathbb P}} 
\newcommand{\TT}{{\mathbb T}} 
\newcommand{\ZZ}{{\mathbb Z}} 
\newcommand{\Fl}{\mathbb{ F}\ell} 
\newcommand{\Flndot}{\Fl(\ndot)}

\newcommand{\prP}{{\textit pr\!}_P} 
 
\newcommand{\calV}{{\mathcal V}} 
 
\DeclareMathOperator{\id}{{\rm id}} 
 
\newcommand{\ndot}{n_\bull} 
\newcommand{\Fdot}{F_{\bull}} 
 
\newcommand{\Adot}{A_{\bull}} 
\newcommand{\topd}{{\rm top}} 
\newcommand{\snd}{{\rm snd}} 
 
\newcommand{\defcolor}[1]{{\color{blue}#1}} 
\newcommand{\demph}[1]{\defcolor{{\sl #1}}} 
 
 
\begin{document} 
 
\title[The fundamental group of open Richardson varieties]{On the fundamental group of\\ open Richardson varieties} 
 
\author{Changzheng Li} 
 \address{School of Mathematics, Sun Yat-sen University, Guangzhou 510275, P.R. China} 
\email{lichangzh@mail.sysu.edu.cn} 
\urladdr{http://math.sysu.edu.cn/gagp/czli} 
\author{Frank Sottile} 
\address{Department of Mathematics. 
Texas A\&M University 
College Station, TX 77843, USA} 
\email{sottile@math.tamu.edu} 
\urladdr{http://www.math.tamu.edu/\~{}sottile} 
 
\author{Chi Zhang} 
 \address{School of Mathematics, Sun Yat-sen University, Guangzhou 510275, P.R. China} 
\email{zhangch223@mail2.sysu.edu.cn; chizhmath@gmail.com} 
 
\thanks{} 
\subjclass{14M15, 14J33, 57M05} 
%
%
\keywords{flag variety, Richardson variety, fundamental group, mirror symmetry}

\begin{abstract} 
  We compute the fundamental group of an open Richardson variety in the manifold of complete flags that 
  corresponds to a partial flag manifold. 
  Rietsch showed that these log Calabi-Yau varieties underlie a Landau-Ginzburg mirror for the 
  Langlands dual partial flag manifold, and our computation verifies a prediction of Hori for this mirror. 
  It is log Calabi-Yau as it isomorphic to the complement of the Knutson--Lam--Speyer 
  anti-canonical divisor for the partial flag  manifold. 
  We also determine explicit defining equations for this divisor. 
\end{abstract} 
 
\maketitle 
 
%
\section{Introduction} 
It is an old problem of Zariski~\cite{Zar1} to compute the fundamental group of the complement of an algebraic curve in the 
complex projective plane. 
The fundamental group of the complement of a projective hypersurface 
reduces to the case of a plane curve by Zariski's Theorem of Lefschetz type~\cite{Zar2}. 
More generally, one may ask about the fundamental group of the complement of a divisor in a projective variety. 
Examples of importance in mirror symmetry are log Calabi-Yau varieties~\cite{GHK, HKP, KKP}, which are 
quasi-projective varieties that are the complement of 
an anti-canonical divisor in a smooth projective variety. 
We consider this case when the ambient projective variety is a flag variety. 
 
Let $G$ be a complex, simply-connected, simple Lie group with a Borel subgroup $B$. 
For an element $u$ in the Weyl group $W$ of $G$, the (opposite) Schubert cells 
$\mathring {X}_u$ and $\mathring X^u$ in $G/B$ 
are affine spaces of codimension and dimension $\ell(u)$ respectively, where 
$\ell\colon W\to \ZZ_{\geq 0}$ is the length function. 
The open Richardson variety 
\[ 
  \mathring X_v^w\ :=\  \mathring X_v\cap \mathring X^w 
\] 
is irreducible and has dimension $\ell(w){-}\ell(v)$ if $v\leq w$ in Bruhat order and otherwise it is empty. 
It is a log Calabi-Yau variety \cite{KLS}. 
We pose the following:
 
\begin{prob}\label{quespi1} 
   What is the fundamental group of $\mathring X_v^w$? 
\end{prob} 
 
Fundamental groups of log Calabi-Yau varieties arise in mirror symmetry, which is about equivalences of two apparently 
completely different physical theories. 
For instance, one mirror symmetry statement asserts that the small quantum cohomology of a Fano  manifold $Y$ should be 
isomorphic to the Jacobi ring of a holomorphic function  $f\colon Z\to \CC$ defined on an open Calabi-Yau variety 
$Z$ \cite{Bat, EHX, Giv}. 
Such pair $(Z, f)$ is a Landau-Ginzburg model mirror to $Y$. 
The Jacobi ring of $f$ is the coordinate ring of the critical points of $f$, and therefore the mirror space $Z$ is not 
uniquely determined. 
Nevertheless, physicists expect a mirror with certain optimal physical properties. 
According to Kentaro Hori\footnote{Personal communication and talks.}, one of these properties is manifested in the 
fundamental group, $\pi_1(Z)$, of $Z$ as follows. 
 
\begin{ass}\label{A:Hori} 
  Let $Y$ be a Fano manifold, and $D$ be a specified anti-canonical divisor on $Y$. 
  If {\upshape $\mbox{Aut}(Y, D)$} contains a maximal compact torus $(S^1)^m$, then an optimal mirror Landau-Ginzburg model 
  $(Z, f)$ should have  $\pi_1(Z)=\ZZ^m$. 
\end{ass} 
 
We consider this when $Z$ is an open Richardson variety $\mathring X^{w_0}_{w_P}$. 
Here, $P\supset B$ is a parabolic subgroup of $G$ and $w_0$ (resp.\ $w_P$) is the longest element in $W$ 
(resp.\ the Weyl group $W_P$ of the Levi subgroup of $P$). 
This is a log Calabi-Yau variety, as it is isomorphic to the complement of the Knutson-Lam-Speyer~\cite{KLS} anti-canonical 
divisor $-K_{G/P}$ in the flag manifold $G/P$. 
Let  $G^\vee$ (resp.\ $P^\vee$) denote the Langlands dual Lie group of $G$ (resp.\ $P$). 
Rietsch~\cite{Riet} constructed a Landau-Ginzburg model $(\mathring X_{w_P}^{w_0}, f)$ mirror to the flag 
manifold  $G^\vee/P^\vee$, assuming unpublished work of Peterson \cite{Pet}. 
This has been verified when  $G^\vee/P^\vee$ is a flag manifold of Lie type $A$~\cite{Riet2} and when it is either a 
minuscule  or a cominuscule flag variety~\cite{LaTe, PeRi, PRW}. 
The automorphism group  of $G^\vee/P^\vee$ is  $G^\vee$ (except for three special types of Grassmannians of  Lie type 
$B, C$, or $G_2$ which are homogeneous with respect to a larger simple Lie group)~\cite{Akhi}. 
The subgroup of $\mbox{Aut}(G^\vee/P^\vee)$ that preserves $-K_{G^\vee/P^\vee}$ is a complex torus $(\CC^\times)^{n-1}$, 
where $G$ has rank $n{-}1$. 
Following Assertion~\ref{A:Hori} and the belief that Rietsch's mirror is optimal, we expect that 
$\pi_1(\mathring X_{w_P}^{w_0})=\ZZ^{n-1}$. 
Our main result verifies this prediction when $G^\vee/P^\vee$ has Lie type $A$. 
 
\begin{thm}\label{mainthm} 
  Let $P$ be a parabolic subgroup of $SL(n, \CC)$. 
  Then $\pi_1(\mathring X_{w_P}^{w_0})=\ZZ^{n-1}$. 
\end{thm}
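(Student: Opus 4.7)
The plan is to combine the paper's identification $\mathring X^{w_0}_{w_P} \cong G/P \setminus D$, where $D$ is the Knutson-Lam-Speyer anti-canonical divisor, with a Gysin-type computation of first homology and an explicit fibration onto the expected torus. Using the explicit defining equations for $D$ announced alongside the theorem, let $D_1, \ldots, D_N$ be its irreducible components and compute their classes in $\mathrm{Pic}(G/P)$. Since $G/P$ is smooth and simply connected, the Gysin long exact sequence collapses to
\[
  H_1\bigl(\mathring X^{w_0}_{w_P},\, \ZZ\bigr) \;=\; \ker\bigl(\ZZ^N \longrightarrow \mathrm{Pic}(G/P)\bigr),
\]
where the $i$-th generator maps to $[D_i]$. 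Writing the classes $[D_i]$ in the Schubert basis of $\mathrm{Pic}(G/P)$ reduces the problem to concrete linear algebra, from which I would expect to read off that the kernel is free abelian of rank exactly $n-1$, matching the Hori prediction.

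To upgrade this homology calculation to $\pi_1$ I would exhibit a surjection $\phi\colon \mathring X^{w_0}_{w_P} \to (\CC^\times)^{n-1}$ whose components are $n-1$ multiplicative combinations of the defining minors of $D$, chosen to be linearly independent modulo the kernel, and whose fibers are simply connected. A natural candidate for the fibers is an affine space arising from a Deodhar or Marsh-Rietsch chart for the open Richardson in $G/B$, compatible with the projection to $G/P$. The long exact homotopy sequence of $\phi$ then yields $\pi_1(\mathring X^{w_0}_{w_P}) = \pi_1((\CC^\times)^{n-1}) = \ZZ^{n-1}$ directly.

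The main obstacle is constructing $\phi$ and proving its fibers are simply connected, in effect giving a topological trivialization $\mathring X^{w_0}_{w_P} \cong (\CC^\times)^{n-1} \times \CC^M$. This requires carefully matching the combinatorics of the defining minors of $D$ with the Bruhat conditions defining $\mathring X^{w_0}_{w_P}$ inside $G/B$, and it may require separating cases depending on the parabolic $P$. A fallback is to verify that $D$ has simple normal crossings away from a codimension-two subvariety; this forces meridians around distinct components to commute, so $\pi_1$ is abelian and equal to $H_1$, at which point the Gysin computation above finishes the proof.
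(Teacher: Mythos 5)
Your overall framework agrees with the paper's: identify $\mathring X^{w_0}_{w_P}$ with the complement of the Knutson--Lam--Speyer divisor and exploit its explicit components. Your $H_1$ computation is essentially sound (though the Gysin sequence gives $H_1(X\smallsetminus D)$ as the \emph{cokernel} of $H_2(G/P)\to\ZZ^N$, $\gamma\mapsto(\gamma\cdot D_i)_i$, not the kernel of the class map $\ZZ^N\to\mathrm{Pic}$; here both are free of rank $N-r=n-1$ since the class map is surjective, so the rank comes out right). But the theorem's content is precisely the passage from $H_1$ to $\pi_1$, and neither of your routes closes that gap. Route (a) asks for a fibration over $(\CC^\times)^{n-1}$ with simply connected fibers, ideally a trivialization $\mathring X^{w_0}_{w_P}\cong(\CC^\times)^{n-1}\times\CC^M$; you give no construction, a single Deodhar/Marsh--Rietsch chart is only a dense open subset (so says nothing about $\pi_1$ of the whole space), and such a product structure is not expected to hold for these open Richardson/positroid-type varieties in general.

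Route (b) is closer to what actually works, but as stated it is both logically incomplete and missing the hard verification. "Meridians around distinct components commute" does not by itself force $\pi_1$ abelian: $\pi_1$ is generated by many conjugate meridians of each single component, and an irreducible component with worse-than-nodal singularities in codimension one can already make the group nonabelian (a cuspidal plane cubic is the standard example). What one needs is that a generic plane section of the total divisor, \emph{including each component's own section and the behavior at the boundary of the affine chart}, is a union of smooth curves meeting pairwise transversally with no triple points; one then applies Zariski--Lefschetz together with Oka--Sakamoto (or Fulton--Deligne/Nori). Verifying this is exactly the paper's main labor: it first strips off the $r$ Schubert-divisor components to land in the big cell $\CC^N$, compactifies to $\PP^N$ with a new component $D_0$ at infinity, and then must show each remaining component is smooth in codimension one, pairwise intersections are generically transverse (including with $D_0$), and triple intersections have codimension three. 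The transversality with $D_0$ forces a detailed analysis of the determinantal defining equations --- that their top homogeneous components are square-free, pairwise coprime, and coprime to the next-highest components (Lemma~\ref{lemma:Irr}, proved over several pages in Section~\ref{secproof}). Your proposal names this verification as a "fallback" but does not carry out any of it, so the proof is not complete.
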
 
 
A flag variety of Lie type $A$ is determined by a sequence $\ndot : 0<n_1<\dotsb<n_r<n$ of integers. 
The corresponding flag variety  $\Flndot$ is the set of all sequences of subspaces 
\[ 
   F_{n_1}\ \subset\ F_{n_2}\ \subset\ \dotsb\ \subset F_{n_r}\ \subset\ \CC^n 
   \quad\mbox{where}\quad  \dim F_i=i\,. 
\] 
This is a subvariety of the product of Grassmannians $G(n_1,n)\times\dotsb\times G(n_r,n)$. 
Under the Pl\"ucker embedding of $G(n_i,n)$ into the projective space $\PP^{\binom{n}{n_i}-1}$, the flag variety 
$\Flndot$ has a Pl\"ucker embedding into the product 
$\PP^{\binom{n}{n_1}-1}\times\dotsb\times\PP^{\binom{n}{n_r}-1}$. 
Although $\Flndot$ is a compactification of $\mathring X_{w_P}^{w_0}$ in this Pl\"ucker embedding, we prove 
Theorem~\ref{mainthm} by considering a different compactification of $\mathring X_{w_P}^{w_0}$ in a single projective 
space. 
This allows us to reduce Theorem~\ref{mainthm} to Zariski's classical case of a plane curve complement. 
We do this by investigating the intersections of the different irreducible components of the Knutson-Lam-Speyer~\cite{KLS} 
anti-canonical divisor $-K_{\Flndot}$, whose defining equations we also determine. 
 
A projected Richardson variety $\prP(X_v^w)$ is the image of a Richardson variety 
$X_v^w=  X_v\cap  X^w$ under the natural projection $\prP\colon G/B\to G/P$. 
This enjoys many geometric properties of Richardson varieties, such as being normal, Cohen-Macaulay, and 
having  rational singularities~\cite{BiCo,Brion,KLS}. 
The union of certain projected Richardson hypersurfaces forms an anti-canonical divisor $-K_{G/P}$ of 
$G/P$~\cite{KLS}. 
Another main result is explicit defining equations in Theorem~\ref{thmequantion} for these projected Richardson 
hypersurfaces in terms of the Pl\"ucker coordinates when $G=SL(n,\CC)$. 
Each is given either by a single Pl\"ucker variable or by a bilinear quadric. 
For instance, $\Fl(1,3; 4)\subset\PP^3\times\PP^3$ is the hypersurface 
$\calV(x_1x_{234}-x_2x_{134}+x_3x_{124}-x_4x_{123})$ and $-K_{\Fl({1,3; 4})}$ is the divisor 
$\calV(x_1x_{234}(x_1x_{234}-x_2x_{134})x_{4}x_{123})$. 
We expect these explicit defining equations  to also be helpful in the study of the mirror symmetry for 
$\Flndot$, similar to the study of mirror symmetry for Grassmannians in~\cite{MaRi}. 
 
The paper is organized as follows. 
We review basic facts on Richardson varieties in Section~\ref{S:OpenRicharadson}. 
We provide an expectation for the fundamental group $\pi_1(\mathring X_{\id}^{v})$ in Section~\ref{S:PiOne}. 
For $G=SL(n, \CC)$, we  derive  the explicit defining equations of $-K_{G/P}$ in terms of the Pl\"ucker coordinates in 
Section~\ref{S:KLS_Equations}, and then  compute the fundamental group of the complement $-K_{G/P}$ in $G/P$ in 
Section~\ref{S:MainResult}. 
Finally in Section~\ref{secproof}, we provide the proof of Lemma \ref{lemma:Irr}.

\section{Open Richardson varieties}\label{S:OpenRicharadson} 
 
Let \defcolor{$G$} be a complex, simply-connected, simple Lie group of rank $n{-}1$, and $\defcolor{B}\subset G$ be a Borel 
subgroup containing a maximal complex torus $\defcolor{\TT}\simeq(\CC^\times)^{n-1}$. 
Let $\defcolor{\Delta}=\{\alpha_1, \dotsc, \alpha_{n-1}\}$ be a basis of simple roots in $(\mbox{Lie}(\TT))^*$. 
The Weyl group \defcolor{$W$} of $G$ is a Coxeter group generated by  the  simple reflections 
$\{s_{\alpha}\mid \alpha\in \Delta\}$, and is identified with the quotient $N_G(\TT)/\TT$, where \defcolor{$N_G(\TT)$} is 
the normalizer of $\TT$ in $G$. 
For each $u\in W$, choose a lift $\defcolor{\dot{u}}\in N_G(\TT)$. 
The opposite Borel is $B^-:= \dot w_0B \dot w_0$, where $w_0$ is the longest element in $W$. 
The (opposite) Schubert cells 
\[ 
   \mathring {X}_u\ :=\ B^- \dot uB/B\ \cong\ \CC^{\dim G/B-\ell(u)} 
  \qquad\mbox{and}\qquad 
  \mathring X^u\ :\ =B \dot uB/B\ \cong\ \CC^{\ell(u)} 
\] 
are  independent of choice of lift $\dot{u}$. 
Henceforth, we write $u$ for $\dot u$. 
 
The root system of $(G, B)$ is $\defcolor{R}:=W\cdot\Delta=R^+\sqcup (-R^+)$, where 
$\defcolor{R^+}:=R\cap \bigoplus_{i=1}^{n-1}{\ZZ_{\geq 0}}\alpha_i$ is the set of positive roots. 
Each root  $\gamma=w(\alpha_i)\in R$ gives a reflection $s_\gamma:=ws_iw^{-1}\in W$, independent of the expressions for 
$\gamma$. 
The \demph{Bruhat order} on $W$ is the transitive closure of its covering relation, \defcolor{$u\lessdot v$} for $u,v\in W$ 
if $\ell(v)=\ell(u)+1$ and $v=us_\gamma$ for some  $\gamma\in R$, where $\ell\colon W\to \ZZ_{\geq 0}$ is the length 
function. 
The \demph{open Richardson variety} 
\[ 
  \defcolor{\mathring X_v^u}\ :\ =\mathring X_v\cap \mathring X^u 
\] 
is irreducible and of dimension $\ell(u)-\ell(v)$ if $v\leq u$, and otherwise it is empty. 
Its closure, a (closed) Richardson variety, is the intersection $\defcolor{X^u_v}:=X_v\cap X^u$ of (opposite) Schubert 
varieties $\defcolor{X_v}$ and $\defcolor{X^u}$, which are closures of the corresponding Schubert cells. 
As $w_0^2=\id$, we have the following identification of open Richardson varieties:
 
\begin{prop}\label{propXX} 
  For any $v\in W$, $\mathring X_v^{w_0}\cong \mathring X_{\id}^{w_0v}$. 
\end{prop}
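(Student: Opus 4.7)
The plan is to exhibit the isomorphism via the automorphism of $G/B$ given by left multiplication by $\dot w_0$, using the hint in the preceding sentence of the excerpt. Left multiplication by $\dot w_0$ is clearly a biregular self-map of $G/B$ with inverse itself (on cosets, since $w_0^2 = \id$, the two lifts $\dot w_0$ and $\dot w_0^{-1}$ differ by an element of $\TT\subset B$, so they induce the same map on $G/B$). Thus it suffices to check that this involution carries $\mathring X_v^{w_0}$ to $\mathring X_{\id}^{w_0 v}$.

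I would do this by checking the two defining cells separately. For the opposite Schubert cell,
\[
   \dot w_0\cdot \mathring X_v \;=\; \dot w_0\,B^-\,\dot v\,B/B
     \;=\; (\dot w_0 B^- \dot w_0)\,\dot w_0\dot v\,B/B \;=\; B\,\dot w_0\dot v\,B/B \;=\; \mathring X^{w_0v},
\]
using $B^-=\dot w_0 B\dot w_0$ from the preceding paragraph of the excerpt and choosing $\dot w_0\dot v$ as the lift of $w_0v$ (which is permissible since the cells are independent of the chosen lift). For the Schubert cell,
\[
   \dot w_0\cdot \mathring X^{w_0} \;=\; \dot w_0\,B\,\dot w_0\,B/B \;=\; (\dot w_0 B\dot w_0)\,B/B \;=\; B^-\cdot B/B \;=\; \mathring X_{\id}.
\]

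Intersecting these two equalities gives $\dot w_0\cdot \mathring X_v^{w_0}=\mathring X_{\id}^{w_0 v}$, and restricting the biregular map $gB\mapsto \dot w_0 gB$ yields the desired isomorphism.

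There is no real obstacle here: the only mild bookkeeping is to make the choice of lifts consistent so that $\dot w_0\dot v$ represents $w_0v$ and $\dot w_0^2$ acts trivially on $G/B$; since the Schubert cells and opposite Schubert cells do not depend on the choice of lift, any such choice suffices.
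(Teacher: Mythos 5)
Your proof is correct and is essentially the paper's own argument: the paper's one-line proof also applies left multiplication by $\dot w_0$, rewriting $\mathring X_v^{w_0}=w_0Bw_0vB/B\cap Bw_0B/B\cong Bw_0vB/B\cap w_0Bw_0B/B=\mathring X^{w_0v}_{\id}$. You have merely written out the same computation cell by cell, with the (harmless) bookkeeping about lifts made explicit.
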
 
\begin{proof} 
   $\mathring X_v^{w_0}\              
   =\ w_0Bw_0vB/B\cap Bw_0B/B\ 
  \cong\  Bw_0vB/B\cap w_0Bw_0B/B\      
   =\ X^{w_0v}_{\id}$. 
\end{proof} 
 
A proper parabolic subgroup $P\supsetneq B$ determines and is determined by a proper subset 
$\defcolor{\Delta_P}\subsetneq\Delta$. 
The Weyl group \defcolor{$W_P$} of (the Levi subgroup) of $P$ is the subgroup of $W$ generated by 
$\{s_\alpha\mid \alpha \in \Delta_P\}$. 
Let \defcolor{$W^P$} be the set of minimal length coset representatives of $W/W_P$. 
We write \defcolor{$\prP$} for both the natural projection $G/B\rightarrow G/P$ and the map $W\rightarrow W^P$ determined 
by $w\in \prP(w)W_P$. 
Then $\prP(w_0)=w_0w_P\in W^P$, where  \defcolor{$w_P$} is the longest element in  $W_P$. 
Following~\cite{KLS}, the \demph{$P$-Bruhat order}, \defcolor{$\leq_P$}, is the suborder of the Bruhat order whose covers 
are \defcolor{$u\lessdot_P v$} when $u\lessdot v$ and $\prP(u)< \prP(v)$. 
The varieties 
\[ 
  \defcolor{\mathring \Pi_v^w}\ :=\ \prP(\mathring X_v^w)\quad 
  \mbox{and}\quad 
  \defcolor{\Pi_v^w}\ :=\ \prP(X_v^w) 
\] 
are  \demph{open} and \demph{closed projected Richardson varieties}, respectively. 
The next proposition is implicit in~\cite{KLS}. 
We explain how it follows from explicit results there. 
 
\begin{prop}\label{propanti} 
  The open Richardson variety $\mathring X_{w_P}^{w_0}$ is isomorphic to the complement in $G/P$  of 
\[ 
   \defcolor{-K_{G/P}}\ :=\ 
   \sum_{\id\lessdot u\leq_P w_0w_P} \prP(X^{w_0w_P}_{u})\ +\ 
   \sum_{\id\leq_P v\lessdot w_0w_P} \prP(X^{v}_{\id})\,, 
\] 
  which is an anti-canonical divisor of $G/P$. 
\end{prop}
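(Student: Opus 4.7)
The plan is to deduce the proposition from three results of~\cite{KLS}: (i) for a $P$-Bruhat pair $(v,w)$ in $W^P\times W^P$, the projection $\prP$ restricts to an isomorphism $\mathring X_v^w\cong\mathring\Pi_v^w$; (ii) the open projected Richardson cells $\mathring\Pi_v^w$ stratify $G/P$ as $(v,w)$ ranges over such pairs; and (iii) the sum of the codimension-one projected Richardson varieties of this stratification represents an anti-canonical divisor of $G/P$.

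First, I would identify $\mathring X_{w_P}^{w_0}$ with the top-dimensional stratum of $G/P$. By Proposition~\ref{propXX}, $\mathring X_{w_P}^{w_0}\cong\mathring X_{\id}^{w_0w_P}$; since $\id,w_0w_P\in W^P$ and $\id\leq_P w_0w_P$, fact~(i) yields an isomorphism $\mathring X_{\id}^{w_0w_P}\cong\mathring\Pi_{\id}^{w_0w_P}\subset G/P$ induced by $\prP$. Because $\ell(w_0w_P)=\dim G/P$, this image is dense open in $G/P$.

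Next, I would describe the complement $G/P\setminus\mathring\Pi_{\id}^{w_0w_P}$. By~(ii) this complement is the union of closures of all strata of codimension $\geq 1$, and set-theoretically it equals the union of closures of the codimension-one strata alone, since each higher-codimension cell lies in the closure of some codimension-one cell. A codimension-one cell is obtained from the top pair $(\id,w_0w_P)$ either by replacing the bottom index with a Bruhat cover $u$ satisfying $\id\lessdot u\leq_P w_0w_P$, producing $\mathring\Pi_u^{w_0w_P}$ with closure $\prP(X_u^{w_0w_P})$, or by replacing the top index with a Bruhat coveree $v$ satisfying $\id\leq_P v\lessdot w_0w_P$, producing $\mathring\Pi_{\id}^v$ with closure $\prP(X_{\id}^v)$. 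Summing these closures reproduces the divisor in the statement, and~(iii) then certifies that it is anti-canonical on $G/P$.

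The main obstacle is bibliographic rather than mathematical: one must extract from~\cite{KLS} the precise formulations underlying (i)--(iii), in particular the explicit form of their anti-canonical representative, and reconcile conventions. KLS typically label projected Richardson hypersurfaces by $P$-Bruhat covers in all of $W$, whereas the displayed formula fixes one of the two indices at $\id$ or $w_0w_P$; translating between the two indexings (and verifying that the two families of hypersurfaces really account for every codimension-one boundary component of the open stratum) takes care but introduces no new geometric input.
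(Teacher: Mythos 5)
Your argument is correct and follows essentially the same route as the paper's proof: both identify $\mathring X_{w_P}^{w_0}$ with $\mathring X_{\id}^{w_0w_P}\cong\mathring\Pi_{\id}^{w_0w_P}$, observe that the latter is the dense stratum of $G/P$ by the dimension formula, describe the boundary as the union of the codimension-one projected Richardson varieties indexed by the $P$-Bruhat covers of the pair $(\id, w_0w_P)$, and then invoke the Knutson--Lam--Speyer anti-canonical divisor result. The specific facts you list as (i)--(iii) are exactly [KLS, Lemma 3.1], [KLS, Proposition 3.6 with Corollary 3.2], and [KLS, Lemma 5.4], which are the references the paper cites.
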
 
\begin{proof} 
  By Proposition \ref{propXX}, we have $\mathring X_{w_P}^{w_0}\cong\mathring X_{\id}^{w_0w_P}$. 
  As $\id\leq_P w_0w_P$, we have $\dim \Pi_{\id}^{w_0w_P}=\ell(w_0w_P)=\dim G/P$ by \cite[Corollary 3.2]{KLS}, 
  and hence $\Pi_{\id}^{w_0w_P}=G/P$. 
  By \cite[Lemma 3.1]{KLS},  $\mathring X_{\id}^{w_0w_P}\cong \mathring\Pi_{\id}^{w_0w_P}$. 
  By \cite[Proposition 3.6]{KLS},  we have  $\Pi_{\id}^{w_0w_P}\smallsetminus \mathring \Pi_{\id}^{w_0w_P}=-K_{G/P}$. 
  It follows again from   \cite[Proposition 3.6, Corollary 3.2]{KLS} that $-K_{G/P}$ is the sum of all projected Richardson 
  hypersurfaces in $\Pi_{\id}^{w_0w_P}$, and hence it is  an anti-canonical divisor of  $\Pi_{\id}^{w_0w_P}$ 
  by \cite[Lemma 5.4]{KLS}. 
\end{proof}

\section{Expectation for $\pi_1(\mathring X_{\id}^v)$}\label{S:PiOne} 
 
The open Richardson variety $\mathring X_{\id}^{w_0w_P}$ has the form $\mathring X_{\id}^v$ where $v\in W$. 
We begin with some well-known facts about fundamental groups. 
 
\begin{prop}[Zariski Theorem of Lefschetz type \cite{Zar2}]\label{ZariskiTHM} 
  Let $V$ be a hypersurface in $\PP^N$. 
  For almost every two-plane $\Lambda\subset \PP^N$, the map 
\[ 
    \pi_1(\Lambda\smallsetminus  V)\ \longrightarrow\  \pi_1(\PP^N\smallsetminus V) 
\] 
  induced by the inclusion $(\Lambda\smallsetminus V)\hookrightarrow(\PP^N\smallsetminus V)$ is an isomorphism. 
\end{prop}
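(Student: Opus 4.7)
The plan is to prove the statement by iterated application of the Lefschetz hyperplane theorem for affine varieties, successively cutting $\PP^N \smallsetminus V$ with generic hyperplanes until we reach a $2$-plane section. Since $V$ is a hypersurface in $\PP^N$, the complement $U := \PP^N \smallsetminus V$ is a smooth affine variety of complex dimension $N$. For a generic hyperplane $H \subset \PP^N$, the Hamm-L\^{e} affine Lefschetz theorem asserts that the pair $(U,\,U \cap H)$ is $(N{-}1)$-connected; by the long exact sequence of homotopy groups this gives an isomorphism $\pi_1(U \cap H) \cong \pi_1(U)$ whenever $N \geq 3$. The requisite genericity is that $H$ be transverse to every stratum of a fixed Whitney stratification of $V$, which is a dense Zariski-open condition on $H$ in the dual projective space.

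Next, I iterate. Choose a generic flag of linear subspaces
\[
  \PP^N\ \supset\ H_{N-1}\ \supset\ H_{N-2}\ \supset\ \dotsb\ \supset\ H_3\ \supset\ H_2\ =\ \Lambda\,,
\]
with $\dim H_k = k$, arranged so that at each stage $H_{k-1}$ is a generic hyperplane in $H_k$ relative to the induced Whitney stratification of $V \cap H_k$. Each inclusion $H_{k-1}\smallsetminus V \hookrightarrow H_k \smallsetminus V$ then induces an isomorphism on $\pi_1$ as long as $k \geq 3$, and composing them yields
\[
  \pi_1(\Lambda \smallsetminus V)\ \cong\ \pi_1(H_3 \smallsetminus V)\ \cong\ \dotsb\ \cong\ \pi_1(\PP^N \smallsetminus V)\,,
\]
the desired conclusion for this $\Lambda$. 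Since the projection from the variety of such nested chains down to the Grassmannian of $2$-planes is surjective, with the locus of bad flags a proper Zariski-closed subset, the collection of $2$-planes $\Lambda$ admitting a generic completion is dense Zariski-open in the Grassmannian.

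The main technical input is the Hamm-L\^{e} affine Lefschetz theorem, which we treat as a black box; the remaining effort is the Bertini-type genericity argument above. I expect the most delicate point in the exposition (rather than in the content) to be the verification that a single Whitney stratification of $V$ can be refined and reused to guide all the successive transversality conditions at once, so that the genericity hypotheses stack cleanly along the flag and define a nonempty open subset of the full flag variety projecting onto a dense open subset of $2$-planes.
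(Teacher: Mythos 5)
Your proposal is essentially correct, but note that the paper does not prove this statement at all: it is quoted as a classical black box with a citation to Zariski's 1937 paper, so there is no internal argument to compare against. What you have written is the standard modern proof, and it is worth saying that this is genuinely different from Zariski's original route: Zariski argued via Lefschetz pencils and monodromy of hyperplane sections, and his argument was only made fully rigorous decades later by Hamm and L\^e (``Un th\'eor\`eme de Zariski du type de Lefschetz,'' 1973), whose affine Lefschetz theorem is exactly the input you invoke. Your reduction is sound: $\PP^N\smallsetminus V$ is affine (complement of an ample divisor), the $(N{-}1)$-connectivity of the pair $(U,U\cap H)$ gives an isomorphism on $\pi_1$ precisely when $N\geq 3$, and iterating down a generic flag terminates correctly at the two-plane stage since the last step is a hyperplane cut inside a three-dimensional ambient space. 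Two small points to tighten. First, the genericity in Hamm--L\^e is transversality of $H$ to the strata of a Whitney stratification of the pair $(\PP^N,V)$, not just of $V$; this is still a dense open condition, so nothing breaks. Second, your passage from ``dense open set of flags'' to ``almost every two-plane'' needs a word: the image of a dense open subset of the flag variety under the projection to the Grassmannian $G(2,N)$ is constructible and dense (Chevalley), hence contains a dense Zariski-open set, which is what ``almost every'' requires. With those remarks, the sketch is a faithful account of how this theorem is actually established in the literature, with the Hamm--L\^e theorem carrying all the weight.
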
 
\begin{prop}[\cite{OkSa}]\label{proppi2} 
 Let $C_1$ and $C_2$ be algebraic curves in $\CC^2$. 
 Assume that the intersection $C_1\cap C_2$ consists of $d_1d_2$ points where $d_i$ is the degree of $C_i$. 
 Then the fundamental group $\pi_1(\CC^2\smallsetminus C_1\cup C_2)$ is isomorphic to the product  
 $\pi_1(\CC^2\smallsetminus C_1)\times\pi_1(\CC^2\smallsetminus C_2)$. 
\end{prop}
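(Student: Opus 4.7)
The plan is to construct the natural homomorphism
\[
  \Phi\ \colon\ \pi_1\bigl(\CC^2\smallsetminus(C_1\cup C_2)\bigr)\ \longrightarrow\ \pi_1(\CC^2\smallsetminus C_1)\times\pi_1(\CC^2\smallsetminus C_2)
\]
induced by the two inclusions $\CC^2\smallsetminus(C_1\cup C_2)\hookrightarrow \CC^2\smallsetminus C_j$, and to show $\Phi$ is an isomorphism. Surjectivity is the easy half: since each $C_j$ has real codimension two, any loop in $\CC^2\smallsetminus C_j$ may be perturbed by a generic small homotopy so as to avoid the other curve as well, so both coordinate projections of $\Phi$ are already surjective; once commutativity of the two factors is established (see below), this upgrades to surjectivity of $\Phi$ by standard means.

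Injectivity is the substantive point, and it relies on the numerical hypothesis through B\'ezout's theorem: $|C_1\cap C_2|=d_1 d_2$ forces $C_1$ and $C_2$ to share no component, to have no intersection at infinity, and to meet transversally at every one of their $d_1 d_2$ common points. Consequently, the only singularities of the divisor $C_1\cup C_2$ are ordinary nodes at $C_1\cap C_2$; in local coordinates near such a node the two branches look like the coordinate axes, so the complement is locally $(\CC^\times)^2$ and a meridian of $C_1$ commutes with a meridian of $C_2$ there.

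To globalise, I would invoke the Zariski--van Kampen theorem with respect to a generic pencil of lines through a base point at infinity. A generic line $L$ in the pencil meets $C_1\cup C_2$ transversally in $d_1{+}d_2$ points, so $\pi_1(L\smallsetminus(C_1\cup C_2))$ is free on meridians $a_1,\dotsc,a_{d_1}$ of $C_1$ and $b_1,\dotsc,b_{d_2}$ of $C_2$. The relations in $\pi_1(\CC^2\smallsetminus(C_1\cup C_2))$ are braid-monodromy relations supplied by the finitely many special lines in the pencil, namely the tangent lines to the individual components and the lines through singular points of $C_1\cup C_2$. Under our transversality hypothesis, each tangency relation involves meridians of a single $C_j$ (and so is already a relation in $\pi_1(\CC^2\smallsetminus C_j)$), while each of the $d_1d_2$ nodes contributes a commutation relation $[a_i,b_k]=1$ between a meridian of $C_1$ and a meridian of $C_2$. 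Collecting these, the resulting presentation splits into a set of relations internal to $C_1$, a disjoint set internal to $C_2$, and the cross-commutators $[a_i,b_k]$. This is exactly the presentation of $\pi_1(\CC^2\smallsetminus C_1)\times\pi_1(\CC^2\smallsetminus C_2)$, so $\Phi$ is an isomorphism.

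The main obstacle I expect is the last step: one must check that the $d_1 d_2$ local commutation relations, one at each node, suffice to make every meridian of $C_1$ commute with every meridian of $C_2$, not only the "nearby" pairs produced by the van Kampen analysis. The clean way to handle this is to note that inside $\pi_1(\CC^2\smallsetminus C_j)$ all meridians of the same irreducible component are conjugate, and then to track this conjugation through the braid monodromy action on the generic fibre; here the assumption $|C_1\cap C_2|=d_1d_2$ is what prevents any extra non-commutation relation from slipping in through a non-transverse intersection, a tangency, or an intersection at infinity.
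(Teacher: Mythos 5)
The paper does not prove this proposition at all: it is quoted verbatim from Oka and Sakamoto's paper \emph{Product theorem of the fundamental group of a reducible curve} (J.~Math.~Soc.~Japan \textbf{30} (1978), 599--602), so there is no in-paper argument to compare against. Your outline is in fact the same Zariski--van Kampen strategy that Oka and Sakamoto themselves use, and your reduction of the hypothesis via B\'ezout is correct: $|C_1\cap C_2|=d_1d_2$ forces no common components, no intersections at infinity, and intersection multiplicity one (hence smooth transverse branches) at each of the $d_1d_2$ points. Surjectivity of $\Phi$ is also fine, and in fact needs no commutativity: a meridian of $C_1$ based away from $C_2$ already maps to $(\text{meridian},1)$, so the image of $\Phi$ contains both factors.

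The genuine gap is exactly the step you flag at the end, and it is the entire content of the theorem, so the proposal cannot be accepted as a proof. Two specific problems. First, your claim that ``the only singularities of the divisor $C_1\cup C_2$ are ordinary nodes at $C_1\cap C_2$'' is false in general: the $C_i$ are not assumed smooth, so each may carry its own singular points, and these contribute further special fibres of the pencil. Second, and more seriously, the assertion that ``each tangency relation involves meridians of a single $C_j$'' does not follow from the locality of the braid. The van Kampen relation attached to a special fibre lying over a tangency or singular point of $C_1$ has the form $a_i = w\,a_{i'}\,w^{-1}$ where the conjugating word $w$ records the global braid monodromy and in general involves the generators $b_k$ of $C_2$ as well. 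To show that the presentation splits as a direct product one must prove that, modulo the $d_1d_2$ nodal commutation relations (each of which only equates a \emph{specific conjugate} of some $a_i$ with a \emph{specific conjugate} of some $b_k$), every $a$-class commutes with every $b$-class and every such $w$ can be rewritten as a word in the $a$'s alone. This requires a careful ordering/induction over the special fibres --- it is precisely the argument occupying Oka--Sakamoto's paper --- and ``tracking the conjugation through the braid monodromy'' is a statement of the problem rather than a solution. Without that step the map $\Phi$ has not been shown injective.
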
 
 
Subvarieties $X$ and $Y$ of projective or affine space meet \demph{transversally} at a point $p\in X\cap Y$ if $p$ is 
a smooth point of each and the defining equations for the tangent spaces $T_pX$ and $T_pY$ are in direct sum. 
They meet transversally if they are transverse at every point of their intersection, which implies 
that $X\cap Y$ is smooth and of the expected dimension. 
They meet \demph{generically transversally} if the subset of points of $X\cap Y$ where they meet 
transversally is dense in every irreducible component of $X\cap Y$. 
The conditions in Proposition~\ref{proppi2} on the curves $C_1$ and $C_2$ is that they meet transversally. 
Indeed, by B\'ezout's Theorem, their projective completions meet in $d_1d_2$ isolated points, counted with 
multiplicity. 
As their intersection consists of $d_1d_2$ points, they are transverse at every point of their intersection. 
 
\begin{prop}[see e.g.\ Remark 2.13 (1) of \cite{LeMa}]\label{proppi3} 
 If $C$ is a smooth algebraic curve in $\CC^2$ whose projective completion is transverse to the line at 
 infinity, then  $\pi_1(\CC^2\smallsetminus C)=\ZZ$. 
\end{prop}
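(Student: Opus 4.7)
The plan is to pass to the projective closure of $C$ in $\PP^2$ and exploit the transversality at infinity together with classical results on fundamental groups of plane curve complements.

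Let $\bar C\subset\PP^2$ denote the projective closure of $C$. Since $C$ is smooth and $\bar C$ meets the line at infinity $L_\infty$ transversally, $\bar C$ is smooth throughout $\PP^2$ of some degree $d$, and intersects $L_\infty$ in $d$ distinct points; in particular $\bar C\cup L_\infty$ is a simple normal crossings divisor in $\PP^2$, and
\[
  \CC^2\smallsetminus C\ =\ \PP^2\smallsetminus(\bar C\cup L_\infty)\,.
\]

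First I would invoke the classical theorem of Zariski, proved rigorously by Deligne via a Lefschetz-type argument, that for any smooth plane projective curve $\bar C$ of degree $d$,
\[
  \pi_1(\PP^2\smallsetminus\bar C)\ \cong\ \ZZ/d\ZZ\,,
\]
generated by a small meridian $\gamma_C$ around $\bar C$. Removing the smooth codimension-one subvariety $L_\infty\smallsetminus\bar C$ from the smooth quasi-projective manifold $\PP^2\smallsetminus\bar C$ then yields a surjection
\[
  \pi_1(\CC^2\smallsetminus C)\ \twoheadrightarrow\ \pi_1(\PP^2\smallsetminus\bar C)\ \cong\ \ZZ/d\ZZ\,,
\]
whose kernel is normally generated by a meridian $\gamma_\infty$ around $L_\infty$. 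Thus $\pi_1(\CC^2\smallsetminus C)$ is generated by $\gamma_C$ and $\gamma_\infty$.

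Next I would exploit the normal crossings structure at an intersection point $q\in\bar C\cap L_\infty$: a small analytic neighborhood of $q$ identifies $\bar C\cup L_\infty$ with two coordinate axes in $\CC^2$, whose complement has fundamental group $\ZZ^2$ with commuting local meridians. Transporting this local commutation to a chosen base point produces a commutation relation between $\gamma_C$ and $\gamma_\infty$ in the global group, and combined with the cyclic abelianization $H_1(\CC^2\smallsetminus C;\ZZ)\cong\ZZ$, which holds because $C$ is an irreducible affine hypersurface (and forces $\gamma_\infty=\gamma_C^{-d}$ in $H_1$), this forces $\pi_1(\CC^2\smallsetminus C)$ to be abelian and hence equal to $\ZZ$.

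The main technical obstacle is the transport from local to global commutation: what the local analysis directly yields is a relation of the form $[g\gamma_C g^{-1},\,h\gamma_\infty h^{-1}]=1$ for certain path-dependent words $g,h$, which is strictly weaker than $[\gamma_C,\gamma_\infty]=1$. Bridging this gap using the cyclic abelianization is the crux, and is precisely the content of \cite[Remark 2.13(1)]{LeMa}.
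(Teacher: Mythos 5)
The paper does not actually prove this proposition---it is quoted with the citation to \cite{LeMa}---so your proposal must stand on its own, and its final step is a genuine gap. Everything up to the surjection $\pi_1(\CC^2\smallsetminus C)\twoheadrightarrow\pi_1(\PP^2\smallsetminus\bar C)\cong\ZZ/d\ZZ$ with kernel normally generated by $\gamma_\infty$ is correct (and noting that transversality forces $\bar C$ to be smooth, hence irreducible, is the right first use of the hypothesis). But the concluding inference---one conjugated commutation relation $[g\gamma_Cg^{-1},h\gamma_\infty h^{-1}]=1$ together with $H_1\cong\ZZ$ ``forces'' the group to be abelian---is not valid, and you concede as much by deferring it to the very remark of \cite{LeMa} that the paper cites for the whole proposition; at that point the proposal is circular. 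The inference genuinely fails: $H_1(\CC^2\smallsetminus C)\cong\ZZ$ holds for \emph{every} irreducible affine plane curve, and for, say, Zariski's three-cuspidal quartic taken transverse to the line at infinity, the affine complement is normally generated by meridians, has $H_1\cong\ZZ$, and even has $\gamma_\infty$ \emph{central}---yet it is non-abelian, because there $\pi_1(\PP^2\smallsetminus\bar C)$ is the non-abelian group of order $12$. So commutation data of the kind you extract, plus cyclic $H_1$, cannot suffice; what you are missing is that $\gamma_\infty$ commutes with \emph{everything}, not with one conjugate of $\gamma_C$. (A smaller slip: the group is generated by $\gamma_C$ together with \emph{all conjugates} of $\gamma_\infty$, not by the two elements $\gamma_C,\gamma_\infty$.)

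Two standard ways to close the gap. First, transversality at infinity implies that the braid monodromy at infinity of a generic linear projection is the full twist, which is central in the braid group; hence $\gamma_\infty=(\gamma_d\cdots\gamma_1)^{-1}$ is central in $G:=\pi_1(\CC^2\smallsetminus C)$. Then $G/\langle\gamma_\infty\rangle\cong\pi_1(\PP^2\smallsetminus\bar C)\cong\ZZ/d\ZZ$ is cyclic with $\langle\gamma_\infty\rangle$ central, so $G$ is abelian, and $G=H_1\cong\ZZ^2/\langle(d,1)\rangle\cong\ZZ$. This is essentially the content of \cite[Remark~2.13(1)]{LeMa}, and it shows where both hypotheses enter: smoothness of $\bar C$ gives the cyclic quotient, transversality gives the centrality. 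Second, one can simply observe that $\bar C\cup L_\infty$ has only nodes and invoke the Deligne--Fulton theorem that complements of nodal projective plane curves have abelian fundamental group, then compute $H_1$ as above. Your local-commutation argument at a single node is precisely the naive approach to that theorem which is known not to work, so it cannot be patched by the abelianization alone.
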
 
 
Since $X_{\id}=G/B$, the Schubert cell $\mathring X_{\id}$ is the complement of the union of Schubert hypersurfaces 
$X_{s_\alpha}$ for $\alpha\in\Delta$. 
For $v\in W$, the Schubert cell $\mathring X^v\cong \CC^{\ell(v)}$. 
Therefore, 
 \begin{align*} 
     \mathring X_{\id}^v \ =\ \mathring X^v  \cap \mathring X_{\id}\ 
                          &=\  \mathring X^v \cap X_{\id}\;\smallsetminus\;  X^v\cap \partial X_{\id}\\ 
                          &=\ \CC^{\ell(v)}\;\smallsetminus\; X^v \cap \cup_{\alpha\in \Delta} X_{s_\alpha}\ 
                          =\ \CC^{\ell(v)}\;\smallsetminus\ \bigcup_{\alpha\in \Delta}X^v_{s_\alpha}\,. 
 \end{align*} 
The Richardson variety  $X_{s_\alpha}^v$ has dimension $\ell(v){-}1$ (and contains $\mathring X_{s_\alpha}^v$ as a Zariski 
open dense subset) if $s_\alpha\leq v$, and otherwise it is empty. 
A Richardson variety is reduced and normal, and thus its singular set has codimension at least two. 
Therefore, if $\Lambda\subset\CC^{\ell(v)}= \mathring X^v$ is a general affine two-plane, then 
$\defcolor{C_\alpha}:= X^v_{s_\alpha} \cap \Lambda$  is a smooth curve in $\Lambda$, whenever $s_\alpha\leq v$. 
If these curves satisfy the hypotheses of Propositions \ref{proppi2} and \ref{proppi3}, we are led to the following 
expectation. 
For any $v\in W$, define $\defcolor{\Gamma(v)}:=\{\alpha\in \Delta\mid  s_\alpha\leq v\}$. 
 
\begin{conjecture}\label{claimPi1} 
 We have $\pi_1(\mathring X_{\id}^v)=\ZZ^{|\Gamma(v)|}$. 
\end{conjecture}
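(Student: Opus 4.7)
The plan is to combine Zariski's Lefschetz-type theorem with an iterated application of Proposition~\ref{proppi2} to reduce the computation to an arrangement of smooth curves in $\CC^2$. The cases $\ell(v)\leq 1$ are immediate (the complement is a point or $\CC^\times$), so assume $\ell(v)\geq 2$.

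Compactify $\mathring X^v = \CC^{\ell(v)}\hookrightarrow\PP^{\ell(v)}$ by adjoining a hyperplane $H_\infty$ at infinity. For each $\alpha\in\Gamma(v)$, let $\overline{D_\alpha}\subset\PP^{\ell(v)}$ be the projective closure of $X^v_{s_\alpha}\cap\mathring X^v$. Then
\[
 \mathring X^v_{\id}\ =\ \PP^{\ell(v)}\;\smallsetminus\;V,\qquad V\ :=\ H_\infty\cup\bigcup_{\alpha\in\Gamma(v)}\overline{D_\alpha}.
\]
By Proposition~\ref{ZariskiTHM}, for a generic $2$-plane $\Lambda\subset\PP^{\ell(v)}$ the inclusion induces an isomorphism $\pi_1(\Lambda\smallsetminus V)\cong\pi_1(\mathring X^v_{\id})$. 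Write $L_\infty := \Lambda\cap H_\infty$, so that $\Lambda\smallsetminus L_\infty\cong\CC^2$, and set $C_\alpha := \Lambda\cap\overline{D_\alpha}$.

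The bulk of the work is to verify, for generic $\Lambda$, four properties: (i) each $C_\alpha$ is a smooth irreducible plane curve (by Bertini together with reducedness and normality of Richardson varieties, whose singular locus has codimension at least two); (ii) each $C_\alpha$ meets $L_\infty$ transversally, so Proposition~\ref{proppi3} gives $\pi_1(\Lambda\smallsetminus(L_\infty\cup C_\alpha))=\ZZ$; (iii) for $\alpha\neq\beta$, the curves $C_\alpha$ and $C_\beta$ meet transversally in $\deg(C_\alpha)\cdot\deg(C_\beta)$ points, all of which lie in the affine piece $\Lambda\smallsetminus L_\infty$; and (iv) no three of the divisors comprising $V$ share a common point of $\Lambda$. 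Granted (i)--(iv), iterated application of Proposition~\ref{proppi2}, adjoining the $C_\alpha$ to $L_\infty$ one at a time, yields
\[
 \pi_1(\mathring X^v_{\id})\ =\ \prod_{\alpha\in\Gamma(v)}\pi_1\bigl(\Lambda\smallsetminus(L_\infty\cup C_\alpha)\bigr)\ =\ \ZZ^{|\Gamma(v)|}.
\]

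The main obstacles are (iii) and (iv). When $\ell(v)\geq 3$, condition (iv) should follow from the expected codimension count: any triple intersection $\overline{D_\alpha}\cap\overline{D_\beta}\cap\overline{D_\gamma}$ has codimension at least $3$ in $\PP^{\ell(v)}$, which a generic $2$-plane avoids. The real difficulty is (iii), which requires both pairwise transversality and the assertion that $\overline{D_\alpha}\cap\overline{D_\beta}\cap H_\infty$ has codimension at least $3$, so that no Bezout point escapes to $L_\infty$. This behavior at infinity is sensitive to the choice of compactification, because the closures in $\PP^{\ell(v)}$ can exhibit pathologies not dictated by the intrinsic geometry of the Richardson varieties. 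For this reason I would not attempt the conjecture in full generality without first identifying a compactification in which each $\overline{D_\alpha}$ admits explicit equations---precisely the kind of description furnished by Theorem~\ref{thmequantion} in the special case $v=w_0w_P$, which underpins the authors' proof of Theorem~\ref{mainthm}. I expect the crux of the general case to lie in locating such a compactification (or devising an alternative geometric argument) that makes (iii) verifiable for every pair $\alpha,\beta\in\Gamma(v)$.
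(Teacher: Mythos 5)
This statement is a \emph{conjecture} in the paper, not a theorem: the authors explicitly present it as an ``expectation'' that would follow \emph{if} the curves $C_\alpha=X^v_{s_\alpha}\cap\Lambda$ satisfy the hypotheses of Propositions~\ref{proppi2} and~\ref{proppi3}, and they prove it only in the special case $v=w_0w_P$ (Theorem~\ref{mainthm}). Your proposal is therefore correctly calibrated: you reproduce exactly the reduction the paper has in mind (Zariski's Lefschetz-type theorem to a generic $2$-plane, then Oka--Sakamoto and the smooth-curve lemma applied iteratively), and you correctly identify that the whole burden lies in your conditions (ii)--(iv) --- pairwise generic transversality of the divisors $X^v_{s_\alpha}$, codimension-three triple intersections, and above all the behavior of the closures at the hyperplane at infinity, which depends on the chosen compactification. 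This is precisely what the paper supplies in the special case via the explicit determinantal equations of Theorem~\ref{thmequantion} and the coprimality statements of Lemma~\ref{lemma:Irr} (square-freeness of the top homogeneous component controls transversality with $D_0$; pairwise coprimality of top components controls triple intersections involving $D_0$). Since you state plainly that you are not proving (iii) and (iv) in general, your proposal is an accurate strategy outline rather than a proof --- which is the honest status of the statement itself. The one point I would sharpen: for $\alpha\neq\beta$ the affine intersection $X^v_{s_\alpha}\cap X^v_{s_\beta}$ inside $\mathring X^v$ is a union of Richardson varieties $X^v_{s_\alpha s_\beta}$ (and $X^v_{s_\beta s_\alpha}$ when the reflections do not commute), and reducedness of this scheme-theoretic intersection is what gives generic transversality in the affine part; so the genuinely open issue in your condition (iii) is not the pairwise transversality in $\CC^{\ell(v)}$ but solely the count at infinity, i.e.\ that no B\'ezout point escapes to $L_\infty$ and that each $C_\alpha$ meets $L_\infty$ transversally.
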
 

\begin{example} 
  The   flag manifold  $SL(3,\CC)/B=\{F_1\subset F_2\subset \CC^3\mid \dim F_i=i\}$ is the hypersurface 
  $\calV(x_{1}x_{23}-x_2x_{13}+x_3x_{12})$ in $\PP^2\times \PP^2$, where $[x_1,x_2,x_3]$ are coordinates for the first 
  $\PP^2$ and $[x_{12},x_{13},x_{23}]$ are coordinates for the second. 
  The Schubert cell $\mathring X_{\id}$ (resp.\ $\mathring X^{w_0}$) is the subset of this hypersurface 
  where $x_1 x_{12}\neq 0$ (resp.\  $x_3x_{23}\neq 0$). 
  Dehomogenizing by setting $x_1= x_{12}=1$, writing the remaining coordinates as $(z_2,z_3,z_{13},z_{23})\in\CC^4$, and 
  using the equation $0=z_{23}-z_2z_{13}+z_3$ to solve for $z_{23}$, we obtain 
\[ 
   \mathring X^{w_0}_{\id}\ =\ 
     \{(z_2, z_3, z_{13})\in \CC^3\mid z_3\neq 0, z_{2}z_{13}-z_3\neq 0\}\,. 
\] 
  This is the complement in $\CC^3$ of two smooth hypersurfaces whose intersection is transverse away from 
  $(0,0,0)$. 
  Intersecting with a general two-plane $\Lambda$ gives two smooth curves in $\Lambda$ that satisfy the hypotheses of 
  Propositions~\ref{proppi2} and~\ref{proppi3}. 
  Thus $\pi_1(\mathring X^{w_0}_{\id})=\ZZ^2$. 
 
  The Schubert subvariety $X^{s_1s_2}$ of $SL(3,\CC)/B$ is  $\calV(x_3,x_1x_{23}-x_2x_{13})$. 
  The Schubert cell $\mathring{X}^{s_1s_2}$ is the subset where $x_2x_{23}\neq 0$. 
  Setting $x_2=x_{23}=1$ and using $z_*$ for the remaining coordinates, 
  gives $\mathring X^{s_1s_2}=\{(z_1,z_{12},z_{13})\in\CC^3\mid z_1-z_{13}=0\}$. 
  Solving for $z_{13}$, we obtain $\mathring X^{s_1s_2}_{\id}=\{(z_1, z_{12})\in \CC^2\mid z_1z_{12}\neq 0\}$, which shows 
  that $\pi_1(\mathring X^{s_1s_2}_{\id})=\ZZ^2$. 
  Fundamental groups of the remaining open Richardson varieties in $SL(3,\CC)/B$ are as follows. 
  \[ 
  \begin{tabular}{|c||c|c|c|c|c|c|} 
     \hline 
     $v$ & $\mbox{id}$ & $s_1$ & $s_2$ & $s_1s_2$ & $s_2s_1$ & $w_0$ \\ \hline\hline 
       \raisebox{-5pt}{\rule{0pt}{16pt}} 
     $\Gamma(v)$ & $\emptyset$ & $\{\alpha_1\}$ & $\{\alpha_2\}$ 
     & $\{\alpha_1, \alpha_2\}$ & $\{\alpha_1, \alpha_2\}$ & $\{\alpha_1, \alpha_2\}$ \\ \hline 
       \raisebox{-5pt}{\rule{0pt}{18pt}} 
     $\pi_1(\mathring X_{\id}^v)$ & $\{\mbox{id}\}$ & $\ZZ$ & $\ZZ$ & $\ZZ^{2}$ & $\ZZ^{2}$ & $\ZZ^{2}$ \\ 
     \hline 
  \end{tabular} 
   \eqno{\diamond} 
  \] 
\end{example} 
 
We establish some lemmas that will help to rewrite the expression for $-K_{G/P}$ from Proposition~\ref{propanti}. 
They use basic facts about reflection groups as could be found in, for example~\cite[$\S 1$]{Humphreys}. 
 
\begin{lemma}\label{lemcox} 
  \begin{enumerate} 
    \item If $w=s_{i_1}\cdots s_{i_m}\in W^P$ is a reduced expression of $w$, then $s_{i_j}\cdots s_{i_m}$ is also in $W^P$ and is again a reduced expression (of length $(m-j+1)$). 
    \item If $\beta\in \Delta_P$ and $v\in W^P$ satisfy both $s_\beta\not\leq v$ and
      $s_\beta v\neq vs_\beta$, then $\ell(s_\beta v)=\ell(v)+1$ and $s_\beta v\in W^P$. 
  \end{enumerate} 
\end{lemma}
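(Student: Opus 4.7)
My plan is to deduce both parts from three standard facts about Coxeter groups: the criterion that $w\in W^P$ if and only if $w(\alpha)\in R^+$ for every $\alpha\in\Delta_P$, the strong exchange condition, and the subword characterization of Bruhat order (see e.g.\ \cite[\S1]{Humphreys}).

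For part (1), I would first recall that any suffix of a reduced expression is itself reduced. To see that $w':=s_{i_j}\cdots s_{i_m}$ lies in $W^P$, I would argue by contradiction. If some $\alpha\in\Delta_P$ gave $\ell(w's_\alpha)<\ell(w')$, the exchange condition would yield a reduced expression of $w'$ ending in $s_\alpha$. Prepending $s_{i_1}\cdots s_{i_{j-1}}$ then produces a word for $w$ of length $(j-1)+(m-j+1)=m=\ell(w)$, hence a reduced expression of $w$ ending in $s_\alpha$, which forces $\ell(ws_\alpha)<\ell(w)$ and contradicts $w\in W^P$.

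For part (2), the key preliminary observation is that since $s_\beta$ is simple, the subword property converts $s_\beta\not\leq v$ into the structural statement that $\beta$ lies outside the support of every reduced expression of $v$; equivalently, $v$ lies in the parabolic subgroup $W_{\Delta\setminus\{\beta\}}$ generated by $\{s_\alpha:\alpha\in\Delta,\alpha\neq\beta\}$. The length equality $\ell(s_\beta v)=\ell(v)+1$ then follows immediately: otherwise $v$ would admit a reduced expression starting with $s_\beta$, contradicting $s_\beta\not\leq v$. To verify $s_\beta v\in W^P$, I would check that $(s_\beta v)(\alpha)=s_\beta(v(\alpha))\in R^+$ for every $\alpha\in\Delta_P$; since $v(\alpha)\in R^+$ and $s_\beta$ preserves positivity on every positive root other than $\beta$, the problem reduces to ruling out $v(\alpha)=\beta$. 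When $\alpha=\beta$, the equality $v(\beta)=\beta$ gives $v s_\beta v^{-1}=s_\beta$ and hence $s_\beta v=v s_\beta$, contradicting the hypothesis. When $\alpha\neq\beta$, I would invoke that $W_{\Delta\setminus\{\beta\}}$ preserves the Levi root subsystem $R_{\Delta\setminus\{\beta\}}$, so $v(\alpha)$ is a nonnegative integer combination of $\Delta\setminus\{\beta\}$ and therefore cannot equal $\beta$.

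The step I expect to require the most care is this last case $\alpha\neq\beta$: translating the Bruhat-order hypothesis $s_\beta\not\leq v$ into the invariance of the Levi subsystem $R_{\Delta\setminus\{\beta\}}$ under $v$ is the non-obvious link between the hypothesis and the root-theoretic conclusion. Once that identification is in hand, the remainder is elementary root-system bookkeeping.
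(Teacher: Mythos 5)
Your proposal is correct and follows essentially the same route as the paper: part (1) via the length criterion for $W^P$ together with the fact that suffixes of reduced expressions are reduced, and part (2) by observing that $s_\beta\not\leq v$ places $v$ in the parabolic subgroup generated by $\Delta\smallsetminus\{\beta\}$, then checking $s_\beta v(\alpha)\in R^+$ for $\alpha\in\Delta_P$ with exactly the same two cases (the support argument for $\alpha\neq\beta$ and the commutation argument for $\alpha=\beta$). The only differences are cosmetic (contrapositive phrasings versus the paper's direct ones).
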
 
\begin{proof} 
 (1) $w\in W^P$ if and only if $\ell(ws_\alpha)=\ell(w)+1$ for all $\alpha \in \Delta_P$. Since the given expression of $w$ is reduced, we have $\ell(s_{i_j}\cdots s_{i_m} 
  s_\alpha)=(m-j+1)+1=\ell(s_{i_j}\cdots s_{i_m})+1$ for any $\alpha\in \Delta_P$. Hence, $s_{i_j}\cdots s_{i_m}\in W^P$ and it is a reduced expression. 
 
  (2) Since $s_\beta \not\leq v$, any reduced expression of $v^{-1}$ does not contain $s_\beta$, and hence $v^{-1}(\alpha)\in R^+$. Thus $\ell(s_\beta v)=\ell(v^{-1}s_\beta)=\ell(v^{-1})+1=\ell(v)+1$. 
 
For any $\alpha\in \Delta_P$, we have $v(\alpha)\in R^+$ as $v\in W^P$;  we claim $s_\beta v(\alpha)\in R^+$ for all such $\alpha$ and hence $s_\beta v\in W^P$. Indeed, if $\alpha\neq \beta$, then we have $v(\alpha)\neq \beta$, as   any reduced expression of $v$ does not contain $s_\beta$. Moreover,  $v(\beta)\neq \beta$ (otherwise $vs_\beta v^{-1}=s_\beta$, contradicting to the hypothesis). Therefore the claim holds by noting  $s_\beta(R^+\setminus\{\beta\})=R^+\setminus\{\beta\}$. 
\end{proof} 
 
\begin{lemma}\label{lemmaGamma} 
    For any parabolic subgroup $P$, we have $\Gamma(w_0w_P)=\Delta$. 
\end{lemma}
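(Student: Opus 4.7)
The plan is to prove the stronger claim that the support of $w_0 w_P$ (the set of simple roots appearing in any reduced expression of $w_0 w_P$) equals all of $\Delta$. By the subword characterization of Bruhat order, $s_\alpha\leq w$ if and only if $\alpha$ lies in the support of $w$, so this immediately yields $\Gamma(w_0 w_P)=\Delta$.

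The first step is to identify the inversion set $\mbox{Inv}(w_0 w_P):=\{\beta\in R^+\mid (w_0 w_P)(\beta)\in -R^+\}$. Because $w_P\in W_P$ does not alter the coefficients of simple roots outside $\Delta_P$, it preserves $R^+\smallsetminus R_P^+$ setwise while sending $R_P^+$ to $-R_P^+$; composing with $w_0$ then flips all signs and yields $\mbox{Inv}(w_0 w_P)=R^+\smallsetminus R_P^+$, whose cardinality agrees with $\ell(w_0 w_P)=\ell(w_0)-\ell(w_P)$.

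The key observation is that the highest root $\theta$ of $R$ lies in this inversion set. Since $G$ is simple, the Dynkin diagram of $W$ is connected, and the standard fact about highest roots of simple Lie algebras then gives $\theta=\sum_{\gamma\in\Delta}c_\gamma\gamma$ with every $c_\gamma$ strictly positive. Because $P$ is proper, $\Delta\smallsetminus\Delta_P$ is non-empty, so $\theta$ has positive coefficient on some simple root outside $\Delta_P$, forcing $\theta\in R^+\smallsetminus R_P^+=\mbox{Inv}(w_0 w_P)$.

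To conclude, for any reduced expression $w_0 w_P=s_{i_1}\cdots s_{i_k}$ the inversion roots take the form $\beta_j=s_{i_1}\cdots s_{i_{j-1}}(\alpha_{i_j})$, and each such $\beta_j$ lies in the $\ZZ_{\geq 0}$-span of $\{\alpha_{i_1},\dotsc,\alpha_{i_j}\}$ since every reflection $s_{i_m}$ modifies a root only by a multiple of $\alpha_{i_m}$. Hence every simple root appearing in any $\beta_j$ is among the $\alpha_{i_m}$. Applied to the index $j_0$ with $\beta_{j_0}=\theta$, this gives $\Delta=\{\gamma\in\Delta\mid c_\gamma>0\}\subseteq\{\alpha_{i_1},\dotsc,\alpha_{i_{j_0}}\}$, so every simple reflection appears in the reduced expression for $w_0 w_P$, as required. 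The only delicate ingredient is the full-support property of $\theta$, but this is classical for simple Lie algebras and is the only point that genuinely uses the simplicity of $G$.
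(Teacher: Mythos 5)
Your proof is correct, and it takes a genuinely different route from the paper's. The paper argues in two cases: for $\alpha\in\Delta\smallsetminus\Delta_P$ it observes $w_0w_P(\alpha)<0$, so some reduced word for $w_0w_P$ ends in $s_\alpha$; for $\alpha\in\Delta_P$ it argues by contradiction, choosing a path in the Dynkin tree from $\alpha$ to a node outside $\Delta_P$ and producing a root $\gamma$ with strictly positive coefficients along that path which $w_0w_P$ must send negative, yet would send positive if $w_0w_P$ lay in the parabolic subgroup omitting $s_\alpha$. You instead give a single uniform argument: the inversion set of $w_0w_P$ is $R^+\smallsetminus R_P^+$, the highest root $\theta$ lies in it because $\theta$ has full support and $\Delta_P\subsetneq\Delta$, and every inversion root of an element is supported on the simple roots occurring in its reduced expressions; combined with the subword characterization of $s_\alpha\leq w$ this forces full support. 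Both arguments use the simplicity of $G$ in an essential and structurally similar way — the paper through connectedness of the Dynkin tree, you through the classical fact that $\theta$ has all coefficients positive — and indeed your use of $\theta$ can be seen as a global, case-free version of the paper's path construction. Your approach buys brevity and uniformity at the cost of invoking the highest root; the paper's is more hands-on and avoids that input. One cosmetic point: the formula $\beta_j=s_{i_1}\cdots s_{i_{j-1}}(\alpha_{i_j})$ parametrizes the inversions of $(w_0w_P)^{-1}$ under the convention $\mathrm{Inv}(w)=\{\beta\in R^+\mid w(\beta)<0\}$ (the inversions of $w$ itself are $s_{i_k}\cdots s_{i_{j+1}}(\alpha_{i_j})$), but either set consists of roots supported on $\{\alpha_{i_1},\dotsc,\alpha_{i_k}\}$, so your conclusion is unaffected.
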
 
\begin{proof} 
 For any $\alpha\in \Delta\smallsetminus \Delta_P$, we have $w_P(\alpha)>0$ and thus $w_0w_P(\alpha)<0$. 
 Consequently, $w_0w_P$ has a reduced expression ending with $s_\alpha$ (by \cite[$\S 1.7$ Exchange Condition]{Humphreys}). 
 Thus $w_0w_P\geq s_\alpha$ and $\alpha\in \Gamma(w_0w_P)$. 
 It remains to show $\Delta_P\subset  \Gamma(w_0w_P)$. 
 
 If $\Delta_P\not\subset  \Gamma(w_0w_P)$, then  there exists $\alpha\in \Delta_P$ such that $s_\alpha\not\leq w_0w_P$. 
Since the Dynkin diagram of $\Delta$ is a tree, 
 there exist  $\{\beta_1,\ldots, \beta_m\}$ satisfying both  (1) $\beta_1=\alpha$,   $\{\beta_1,\ldots, \beta_{m-1}\}\subset \Delta_P$,   $\beta_m\in \Delta\setminus \Delta_P$,  and (2) $\beta_i$ is adjacent to $\beta_{i+1}$ for $i=1,\ldots, m-1$. 
 Then for $\gamma:=s_{\beta_1}\cdots s_{\beta_{m-1}}(\beta_m)=\sum_{j=1}^m a_j\beta_j$ with $a_j>0$ for all $j$, we have 
 $w_P(\gamma)=w_P(\sum_{j=1}^{m-1} a_j\beta_j)+w_P(a_m\beta_m)>0$ and consequently $w_0w_P(\gamma)<0$. 
 However, $w_0w_P$ is in the Weyl subgroup generated by $\{s_\beta\mid \beta\in \Delta\smallsetminus\{\alpha\}\}$, by the 
 hypothesis  $s_\alpha\not\leq w_0w_P$. 
 Thus we deduce a contradiction by noting   $w_0w_P(\gamma)=w_0w_P(a_1\alpha)+w_0w_P(\sum_{j=2}^ma_j\beta_j)>0$. 
\end{proof} 
 
For general $G/P$, the expectation $\pi_1(\mathring X_{\id}^{w_0w_P})\simeq \ZZ^{|\Delta|}$ would follow from 
Conjecture~\ref{claimPi1} and Lemma~\ref{lemmaGamma}. 
We refine the description of $-K_{G/P}$ of Proposition~\ref{propanti}. 
Moreover, we have the following. 
 
\begin{lemma}\label{lemmared} 
   Let $u\in W$. Then we have 
   \begin{enumerate} 
     \item ${\mbox{id}\lessdot u\leq_P  w_0w_P}$ if and only if $u=s_\alpha$ for some $\alpha\in \Delta$. 
     \item  ${\mbox{id}\leq_P u\lessdot w_0w_P}$ if and only if 
            $u=\prP(w_0s_\alpha)=w_0s_\alpha w_P$ for   $\alpha\in \Delta\smallsetminus \Delta_P$. 
   \end{enumerate} 
\end{lemma}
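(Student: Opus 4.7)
The plan is to analyze both parts through the projection $\prP$ and its interaction with the $P$-Bruhat order, using two main tools. First, along any $P$-Bruhat chain $\id = u_0 \lessdot_P u_1 \lessdot_P \cdots \lessdot_P u_k$ starting at $\id$, every $u_i$ lies in $W^P$: inductively, if $u_{i-1} \in W^P$ then $\prP(u_{i-1}) = u_{i-1}$ has length $i-1$, and the $P$-cover condition $\prP(u_{i-1}) < \prP(u_i)$ together with $\ell(\prP(u_i)) \leq \ell(u_i) = i$ forces $\ell(\prP(u_i)) = i$, so $u_i \in W^P$. Second, the map $\phi \colon W^P \to W^P$ given by $\phi(x) := \prP(w_0 x) = w_0 x w_P$ is a well-defined order-reversing involution with $\ell(\phi(x)) = \ell(w_0 w_P) - \ell(x)$. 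Well-definedness uses $x(\Delta_P) \subset R^+$ for $x \in W^P$ and $w_0(R^+) = R^-$ to show $(w_0 x w_P)(\beta) > 0$ for $\beta \in \Delta_P$; the involution property follows from $w_0^2 = w_P^2 = \id$; the length and order-reversing properties follow from $\ell(x w_P) = \ell(x) + \ell(w_P)$ combined with the fact that left-multiplication by $w_0$ reverses Bruhat order.

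For part (2), the Bruhat cover $v \lessdot w_0 w_P$ gives $\ell(v) = \ell(w_0 w_P) - 1$, and $\id \leq_P v$ forces $v \in W^P$ by the first tool. Applying $\phi$, the element $\phi(v) \in W^P$ has length $1$, so $\phi(v) = s_\alpha$ for some $\alpha$ with $s_\alpha \in W^P$; this means $\alpha \in \Delta \smallsetminus \Delta_P$, and $v = \phi(s_\alpha) = w_0 s_\alpha w_P = \prP(w_0 s_\alpha)$. Conversely, for $\alpha \in \Delta \smallsetminus \Delta_P$, setting $v := w_0 s_\alpha w_P$ gives $v \in W^P$ with $\ell(v) = \ell(w_0 w_P) - 1$, and $\phi$ sends the cover $\id \lessdot s_\alpha$ in $W^P$ to the cover $v \lessdot w_0 w_P$. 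Finally, $\id \leq_P v$ holds via a saturated Bruhat chain from $\id$ to $v$ inside $W^P$, which is automatically a $P$-chain since $\prP$ restricts to the identity on $W^P$.

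For part (1), the forward direction is immediate: $\id \lessdot u$ forces $\ell(u) = 1$, so $u = s_\alpha$ for some $\alpha \in \Delta$. The converse reduces to showing $s_\alpha \leq_P w_0 w_P$ for every $\alpha \in \Delta$. When $\alpha \in \Delta \smallsetminus \Delta_P$, both $s_\alpha$ and $w_0 w_P$ lie in $W^P$, and Lemma~\ref{lemmaGamma} yields $s_\alpha \leq w_0 w_P$, so a saturated Bruhat chain between them inside $W^P$ is a $P$-chain. When $\alpha \in \Delta_P$, pick any $\beta \in \Delta \smallsetminus \Delta_P$ (which exists since $P$ is proper); then $s_\alpha \lessdot s_\alpha s_\beta$ is a Bruhat cover, and since $s_\alpha s_\beta \notin W_P$ we have $\prP(s_\alpha s_\beta) > \id = \prP(s_\alpha)$, so $s_\alpha \lessdot_P s_\alpha s_\beta$ is a $P$-cover. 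One then extends this into a $P$-chain reaching $w_0 w_P$ by further appending simple reflections, chosen so that the projection $\prP$ strictly grows at each step.

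The main obstacle is completing the chain in part (1) when $\alpha \in \Delta_P$. Although the initial $P$-cover $s_\alpha \lessdot_P s_\alpha s_\beta$ is straightforward, $s_\alpha s_\beta$ typically lies outside $W^P$, so one cannot simply finish with a saturated chain inside $W^P$. The continuation must be engineered carefully, for instance by following a reduced expression of $w_0 w_P$, to ensure that every Bruhat cover used strictly increases the $W^P$-projection.
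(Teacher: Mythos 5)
Your treatment of part (2) and of the easy half of part (1) is essentially sound, though it runs through slightly different machinery than the paper: you package the forward direction of (2) as the order-reversing involution $x\mapsto w_0xw_P$ of $W^P$, and you repeatedly invoke the fact that two comparable elements of $W^P$ are joined by a saturated Bruhat chain lying entirely in $W^P$ (which is then automatically a chain of $P$-covers, since $\prP$ is the identity on $W^P$). That chain property of quotient Bruhat orders is standard but not trivial and should be cited (e.g.\ Bj\"orner--Brenti, Theorem 2.5.5); the paper sidesteps it by producing explicit chains from suffixes of a reduced word for $w_0w_P$ via Lemma~\ref{lemcox}(1), and by computing $uw_P=w_0s_\alpha$ directly from the length count in place of your involution. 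Either route is acceptable for these portions.

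The genuine gap is exactly where you flag it: the claim $s_\alpha\leq_P w_0w_P$ for $\alpha\in\Delta_P$. Your first step $s_\alpha\lessdot_P s_\alpha s_\beta$ is correct, but ``extend this into a $P$-chain\dots chosen so that the projection strictly grows at each step'' restates the goal rather than proving it; since $s_\alpha s_\beta$ generally lies outside $W^P$, neither of your two tools applies, and it is not evident that a greedy continuation exists. This is precisely where the paper invests its effort: it fixes a reduced expression $w_0w_P=s_{i_1}\cdots s_{i_l}$, takes $m$ to be the last position with $s_{i_m}=s_\alpha$ (such a position exists because $s_\alpha\leq w_0w_P$ by Lemma~\ref{lemmaGamma} and the subword property), and builds the chain whose $j$-th term is $s_{i_m}\cdot s_{i_{l-j+2}}\cdots s_{i_l}$ for $2\leq j\leq l-m$ and the plain suffix $s_{i_{l-j+1}}\cdots s_{i_l}$ for $j>l-m$. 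Checking that each of these Bruhat covers is a $P$-cover then requires the dichotomy of Lemma~\ref{lemcox}(2): either $s_{i_m}$ commutes with the suffix, in which case one tracks $\prP$ of the product directly, or it does not, in which case the product is again in $W^P$ and its projection is itself. Without an argument of this kind, your proof of part (1) is incomplete for $\alpha\in\Delta_P$.
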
 
\begin{proof} 
 If  ${{\id}\lessdot u}$, then $u=s_\alpha$ for some $\alpha\in \Delta$. 
 If $\alpha \in \Delta\smallsetminus \Delta_P$, then as in the beginning of the proof of Lemma~\ref{lemmaGamma}, 
 $w_0w_P$ admits a reduced expression $w_0w_P=s_{i_1}\cdots s_{i_l}$ where $l=\ell(w_0w_P)$ and $s_{i_l}=s_\alpha$. 
 For $j=1,\dotsc,l$, set $\defcolor{v_j}:=s_{i_{l-j+1}}\cdots s_{i_{l-1}}s_{i_l}$. 
 Then we have  $s_\alpha=v_1\lessdot\cdots\lessdot v_l=w_0w_P$ with $v_j\in W^P$ for any $j$ by Lemma \ref{lemcox} (1), which implies 
 $\prP(v_j)=v_j$. 
 Hence, $s_\alpha \leq_P w_0w_P$. 
 If $\beta\in \Delta_P$, then we still have $s_\beta\leq w_0w_P$ by Lemma \ref{lemmaGamma}, so $s_\beta$ must occur in the 
 aforementioned reduced expression of $w_0w_P$. 
 Let $m:=\max\{j\mid s_{i_j}=s_\beta\}$. 
 Set $u_j= s_{i_{l-j+1}}\cdots s_{i_{l-1}}s_{i_l}$ if $l-m+1\leq j\leq l$, 
 $\defcolor{u_j}= s_{i_m}s_{i_{l-j+2}}\cdots s_{i_{l-1}}s_{i_l}$ if $2\leq j\leq l-m$ (in which case $s_\beta u_j=s_{i_{l-j+2}}\cdots s_{i_l}\in W^P$ by Lemma \ref{lemcox} (1), and we will discuss whether it commutes with $s_\beta$), and $u_1=s_{i_m}$. 
 Then we have  $s_{\beta}=u_1\lessdot\cdots\lessdot u_l=w_0w_P$. 
 For $j\leq l-m$, we notice that  if $u_j=s_{i_m}u_js_{i_m}$, then $u_r=s_{i_m}u_rs_{i_m}$ and $s_{i_m}u_r\in W^P$ hold for 
 any $r\leq j$, and  if $u_j\neq s_{i_m}u_js_{i_m}$ then $u_j\in W^{P}$ by Lemma \ref{lemcox} (2). 
 It follows that $s_\beta\leq_P w_0w_P$ by definition. 
 
 If ${\id}\leq_P u$, then by definition we have $\ell(\prP(u))-\ell(\prP({\id}))\geq \ell(u)-\ell({\id})$, implying that 
 $\ell(\prP(u))\geq \ell(u)$ and hence $u\in W^P$. 
 Together with $u\lessdot {w_0w_P}$, it follows that $\ell(u)=\ell(w_0w_P)-1=\ell(w_0)-\ell(w_P)-1$, so that 
 $\ell(uw_P)=\ell(w_0)-1$. Hence, $uw_P=w_0s_\alpha$ for some $\alpha\in \Delta$. 
 This further implies $u=w_0s_\alpha w_P$, and hence 
 $\ell(w_0)-\ell(w_P)-1=\ell(u)=\ell(w_0s_\alpha w_P)=\ell(w_0)-\ell(s_\alpha w_P)$. 
 Therefore $\ell(s_\alpha w_P)=\ell(w_P)+1$, implying $\alpha\notin \Delta_P$. 
 On the other hand, for $\alpha\in \Delta\smallsetminus \Delta_P$, for any $\gamma\in R_P^+$, we have 
 $w_P(\gamma)\in R_P^-$, implying $s_\alpha w_P(\gamma) \in R^-$ and hence $w_0s_\alpha w_P(\gamma)\in R^+$. 
 Therefore $\prP(w s_\alpha)=w s_\alpha w_P\in W^P$ and ${\id}\leq_P ws_\alpha w_P$ for any such $\alpha$. 
\end{proof} 
 
\begin{prop}\label{propanti22} 
  ${\displaystyle -K_{G/P} \ =\ 
    \sum_{\alpha\in \Delta} \prP(X^{w_0w_P}_{s_\alpha})\ +\ 
   \sum_{\alpha\in \Delta\smallsetminus \Delta_P}  \prP(X^{w_0s_\alpha w_P}_{\id})}$. 
\end{prop}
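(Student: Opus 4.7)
The plan is to obtain Proposition \ref{propanti22} as an essentially immediate consequence of Proposition \ref{propanti} combined with Lemma \ref{lemmared}. Proposition \ref{propanti} gives the expression
\[
  -K_{G/P}\ =\ \sum_{\id\lessdot u\leq_P w_0w_P} \prP(X^{w_0w_P}_{u})\ +\
   \sum_{\id\leq_P v\lessdot w_0w_P} \prP(X^{v}_{\id})\,,
\]
so it suffices to identify the two index sets explicitly. The hard part has already been done in Lemma \ref{lemmared}; what remains is purely a matter of substitution.

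First I would handle the first sum. By Lemma \ref{lemmared}(1), the condition $\id\lessdot u\leq_P w_0w_P$ is equivalent to $u=s_\alpha$ for some $\alpha\in\Delta$ (recall from Lemma \ref{lemmaGamma} that every simple reflection is below $w_0w_P$ in Bruhat order, and the proof of Lemma \ref{lemmared}(1) upgrades this to $\leq_P$). This immediately rewrites the first sum as $\sum_{\alpha\in\Delta}\prP(X^{w_0w_P}_{s_\alpha})$, which matches the first term in Proposition~\ref{propanti22}.

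Next I would handle the second sum. By Lemma \ref{lemmared}(2), the condition $\id\leq_P v\lessdot w_0w_P$ is equivalent to $v=w_0s_\alpha w_P$ for some $\alpha\in\Delta\smallsetminus\Delta_P$. This rewrites the second sum as $\sum_{\alpha\in\Delta\smallsetminus\Delta_P}\prP(X^{w_0s_\alpha w_P}_{\id})$, which matches the second term in Proposition~\ref{propanti22}. Combining the two rewritings yields the claimed formula.

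There is essentially no obstacle here, since both identifications of index sets are exactly the content of Lemma \ref{lemmared}. The only minor point worth noting in writing this out is that the two sums in Proposition \ref{propanti} are indexed by distinct kinds of conditions (a Bruhat cover combined with a $\leq_P$ bound), but Lemma \ref{lemmared} is stated in precisely this form, so no additional bookkeeping is required.
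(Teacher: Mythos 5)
Your proposal is correct and is exactly the paper's argument: the paper also proves this proposition as a direct consequence of Proposition~\ref{propanti} and Lemma~\ref{lemmared}, substituting the index sets identified there into the two sums.
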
 
\begin{proof} 
   This is a direct consequence of Proposition \ref{propanti} and  Lemma \ref{lemmared}. 
\end{proof} 
 
\section{Defining equations of   $-K_{SL(n, \CC)/P}$} 
\label{S:KLS_Equations} 
Henceforth, we assume that $G=SL(n, \CC)$. 
Then $SL(n, \CC)/P=\defcolor{\Flndot}$ is the manifold of partial flags 
$\Fdot \colon F_{n_1}\subset\dotsb\subset F_{n_r}\subset \CC^n$ of type $\ndot$ ($\dim F_{n_i}=n_i$). 
Here $\defcolor{\ndot}:=1\leq n_1<\dotsb<n_r<n$ is an increasing sequence of integers and $P$ is the parabolic 
subgroup corresponding to the roots not in $\ndot$, so that $\defcolor{\Delta_P}=\{\alpha_i\mid i\not\in\ndot\}$. 
Also, $W=S_n$ is the symmetric group generated by simple transpositions $\{s_i\mid 1\leq i\leq n{-}1\}$.

The natural embedding of $\Fl(\ndot)$ into the product 
\[ 
  G(n_1,n)\ \times\ G(n_2,n)\ \times\ \dotsb\ \times\ G(n_r,n) 
\] 
of Grassmannians and then into the product $\PP(\wedge^{n_1}\CC^n) \times \dotsb \times\PP(\wedge^{n_r}\CC^n)$ of Pl\"ucker 
spaces gives Pl\"ucker coordinates \defcolor{$x_J$} for $\Flndot$. 
We describe their indexing. 
For any positive integer $m$, set $\defcolor{[m]}:=\{1,\dotsc,m\}$ and write \defcolor{$\binom{[m]}{j}$} for the set of 
subsets $J$ of $[m]$ of cardinality $j$, which we always write as increasing sequences. 
There is a  Pl\"ucker coordinate \defcolor{$x_J$} for $\Flndot$ for every $J\in\binom{[n]}{n_j}$, for each 
$j=1,\dotsc,r$. 
 
Let us explain $x_J$ concretely in terms of local coordinates for $\Flndot$. 
A point $\Fdot\in\Flndot$ is represented by a $n_r\times n$ matrix \defcolor{$\Adot$} of full rank $n_r$, where 
$F_{n_j}$ is the row space of the first $n_j$ rows of $\Adot$. 
For $J\in\binom{[n]}{n_j}$, the Pl\"ucker coordinate $x_J$ of $\Fdot$ is the determinant of the $n_j\times n_j$ submatrix 
of $\Adot$ formed by the first $n_j$ rows and the columns from $J$. 
This is the $J$th minor of the matrix formed by the first $n_j$ rows of $\Adot$. 
 
For $a<b\leq n$, let us write \defcolor{$(a,b]$} for the set $\{a{+}1,\dotsc,b\}$ and \defcolor{$[a,b)$} for 
$\{a,\dotsc,b{-}1\}$. 
Note that $(0,i]=[i]$. 
If $J\subset[a]$ and $J'\subset(a,n]$, then $J,J'$ is the index $J\cup J'\subset[n]$. 
 
Elements of $W^P$ index Schubert varieties in $\Flndot$, while elements of $S_n$ index Schubert varieties in $\Fl(n)=G/B$. 
The Richardson variety $X_{\id}^{w_0w_P}$ projects birationally onto $\Flndot$, under the map 
$\prP\colon  \Fl(n)\to \Flndot$. 
We describe explicit equations for the irreducible components of $-K_{\Flndot}$, which were identified in 
Proposition~\ref{propanti22}. 
 
\begin{thm}\label{thmequantion} Let $i\in [n{-}1]$. 
  \begin{enumerate} 
  \item For $i\in \ndot$, $\prP(X^{w_0s_i w_P}_{\id})$ is the Schubert divisor 
       of $\Flndot$ defined by the Pl\"ucker coordinate hyperplane $x_{(n-i, n]}=0$. 
  \item When $i\in \ndot$, $\prP(X^{w_0w_P}_{ s_i})$ is the Schubert divisor 
       of $\Flndot$ defined by  the Pl\"ucker coordinate hyperplane $x_{[i]}=0$. 
 \item When $i<n_1$, $\prP(X^{w_0w_P}_{s_i})$ is given by $x_{[i],(n-n_1+i, n]}=0$. 
 
 \item When $i>n_r$,  $\prP(X^{w_0w_P}_{s_i})$ is given by $x_{(i-n_r,i]}=0$. 
 
 \item When $n_j<i<n_{j+1}$ with $j\in[r{-}1]$, set $\defcolor{k}:=i-n_j$ and 
       $\defcolor{l}:=\min\{i, n{-}n_{j+1}{+}k\}$. 
 
   The projected Richardson hypersurface $\prP(X^{w_0w_P}_{s_i})$ is given  by 
   \begin{equation}\label{Eq:ProjectedRichardson} 
     \sum_{J\in\binom{[l]}{k}} (-1)^{|J|} x_{[i]\smallsetminus J}\cdot x_{J,(n-n_{j+1}+k,n]}\ =\ 0\ , 
 \end{equation} 
  where \defcolor{$|J|$} is the sum of the elements in $J$. 
\end{enumerate} 
\end{thm}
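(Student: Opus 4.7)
The overall plan is to treat each of the five cases separately. Parts (1) and (2), where $i \in \ndot$, reduce to classical descriptions of Schubert divisors in $\Flndot$ by single Pl\"ucker coordinate hyperplanes on the Grassmannian factor $G(i,n)$. For Part (1), $\prP(X^{w_0 s_i w_P}_{\id}) = X^{w_0 s_i w_P}$ is a Schubert variety of $\Flndot$ (as $X_{\id} = \Fl(n)$ and $w_0 s_i w_P \in W^P$ by Lemma~\ref{lemmared}); on the $G(i,n)$ factor it is the $B$-Schubert divisor cut out by $x_{(n-i, n]} = 0$, and the projection is the whole Grassmannian on the other factors. For Part (2), the defining condition $F_i \cap \langle e_{i+1}, \ldots, e_n\rangle \neq 0$ of $X_{s_i}$ only involves $F_{n_j} = F_i$ when $i \in \ndot$, and descends to $\{x_{[i]} = 0\}$ on $G(i,n)$. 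Parts (3) and (4) will emerge as boundary cases ($j = 0$ and $j = r$) of the main argument.

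For Part (5), with $n_j < i < n_{j+1}$ and $k := i - n_j$, the plan is to work on the big $B$-cell $\mathring X^{w_0 w_P}$ of $\Flndot$---the Zariski-dense open subset where the extremal Pl\"ucker coordinate $x_{(n - n_l, n]}$ is nonzero on every Grassmannian factor---and verify the equation there, then extend by closure. Parametrize this cell by writing each point as $u \cdot (w_0 w_P \cdot E_\bullet)$ for $u$ in a unipotent subgroup of dimension $\dim \Flndot$, where $E_\bullet$ is the standard flag. Since $\prP$ restricts to an isomorphism $\mathring X^{w_0 w_P} \to \mathring \Pi^{w_0 w_P}$, each partial flag $\Fdot$ in this cell has a unique full-flag lift $F^{\mathrm{full}}_\bullet$ in $\mathring X^{w_0 w_P}$. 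The Richardson condition $F^{\mathrm{full}}_\bullet \in X_{s_i}$, namely $F^{\mathrm{full}}_i \cap \langle e_{i+1}, \ldots, e_n\rangle \neq 0$, is equivalent to the vanishing of the $i \times i$ top-left minor $\Delta$ of the $n_{j+1} \times n$ matrix $N$ whose rows span $F_{n_{j+1}}$ (the first $n_j$ rows spanning $F_{n_j}$ and the middle $k$ rows extending to $F^{\mathrm{full}}_i$).

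To identify $\Delta = 0$ with (\ref{Eq:ProjectedRichardson}), I plan the following Laplace-expansion argument. Expand $\Delta$ along its top $n_j$ rows: each summand is $\pm x_S(F_{n_j}) \cdot m_J$, where $m_J$ denotes the $k \times k$ minor of the middle rows on columns $J := [i] \setminus S$ and $S \in \binom{[i]}{n_j}$. A key structural fact from the parametrization is that the middle $k$ rows of $N$ vanish identically on all columns in $T := (n - n_{j+1} + k, n]$, so that $m_J = 0$ whenever $J \cap T \neq \emptyset$; this forces only $J \subset [l]$ to contribute, with $l = \min\{i, n - n_{j+1} + k\}$. Moreover, for $J \cap T = \emptyset$, Laplace-expanding the Pl\"ucker $x_{J \cup T}(F_{n_{j+1}})$ along the middle rows identifies it, up to a sign, with $m_J$ times the determinant of the $(n_{j+1} - k) \times (n_{j+1} - k)$ submatrix on the remaining $n_{j+1} - k$ rows and columns $T$; that determinant is $\pm 1$ because those rows have their pivots exactly on $T$. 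Hence $m_J = \pm x_{J \cup T}(F_{n_{j+1}})$, and reindexing $S \leftrightarrow J$ transforms $\Delta$ into (\ref{Eq:ProjectedRichardson}) up to an overall sign. Parts (3) and (4) are the boundary cases ($j = 0$ or $j = r$) where the sum collapses to the single Pl\"ucker coordinate claimed. Finally, invoking Lemma~\ref{lemma:Irr} (proved in Section~\ref{secproof}) for irreducibility of the hypersurface defined by (\ref{Eq:ProjectedRichardson}), the equality with $\prP(X^{w_0 w_P}_{s_i})$ follows: both are irreducible divisors of $\Flndot$, and the projected Richardson is contained in the hypersurface by the computation above.

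The hardest step will be verifying the structural claims about the parametrization---that the middle $k$ rows vanish on columns of $T$, and that the complementary rows have their pivots exactly on $T$ with unit determinantal submatrix---together with the precise sign bookkeeping in the two Laplace expansions. These structural claims follow from explicit computation of $w_0 w_P(a)$ for $a \in [n_{j+1}]$, using that $w_P$ reverses each block $(n_{l-1}, n_l]$, together with the unipotent parametrization of $\mathring X^{w_0 w_P}$. Irreducibility of (\ref{Eq:ProjectedRichardson}) (Lemma~\ref{lemma:Irr}) is the other substantial obstacle, deferred to Section~\ref{secproof}.
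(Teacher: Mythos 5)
Your proposal is correct and follows essentially the same route as the paper: parametrize the big cell $\mathring X^{w_0w_P}$ by a structured matrix, express the defining equation $x_{[i]}=0$ of $X_{s_i}$ via a Laplace expansion separating the first $n_j$ rows from the next $k$ rows, identify the $k\times k$ minors with the Plücker coordinates $x_{J,(n-n_{j+1}+k,n]}$ using the block structure (with a $J$-independent sign), and conclude by irreducibility of the resulting hypersurface. The only cosmetic differences are that you expand along the top $n_j$ rows rather than the last $k$ (the same expansion) and argue (1) directly, whereas the paper deduces (1) from (2) by translating by $w_0$.
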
 
\begin{proof} 
 We start with the most involved case (5). 
 As a first check, note that in~\eqref{Eq:ProjectedRichardson} the first Pl\"ucker coordinate 
 $x_{[i]\smallsetminus J}$ has $n_j$ indices, while 
 $x_{J,(n-n_{j+1}+k,n]}$ has $n_{j+1}$ indices. 
 To prove Statement (5), set $\defcolor{a_1}:=n_1$ and $\defcolor{a_i}:=n_{i}-n_{i-1}$ for $i=2,\dotsc,r$. 
 We start with a structured matrix parameterizing the Schubert cell $\mathring{X}^{w_0w_P}$, which has the block form 
 \begin{equation}\label{Eq:OlocalCoords} 
  \left(\begin{matrix} 
      * &   *    &\dotsc &   *   &    *   & I_{a_1}\\ 
      * &   *    &\dotsc &   *   & I_{a_2} &   0  \vspace{-4pt} \\ 
      * &   *    &\iddots&\iddots&   0    &   0  \vspace{-6pt}  \\ 
  \vdots& \vdots &\iddots&   0   &   0    &   0   \\ 
      * & I_{a_r} &   0   & \dotsb&   0    &   0 
   \end{matrix}\right)_{n_r\times n}\ . 
 \end{equation} 
Here, $I_{a}$ is the $a\times a$ identity matrix. 
Observe that the first column block has $n{-}n_r$ columns. 
The hypersurface Schubert variety $X_{s_i}$ in $G/B$ is defined by the single Pl\"ucker coordinate $x_{[i]}$, which is not 
a Pl\"ucker coordinate on $G/P$ when $i\not\in\ndot$. 
Our equation for $\prP(X_{s_i}^{w_0w_P})$ is obtained by evaluating $x_{[i]}$ on the  coordinates~\eqref{Eq:OlocalCoords} 
for $\mathring{X}^{w_0w_P}$ and expressing it in terms of the Pl\"ucker coordinates for $G/P$. 
 
To that end, suppose that $i\not\in\ndot$, and for now that $n_j<i<n_{j+1}$ as above. 
Consider the first $i$ rows of~\eqref{Eq:OlocalCoords}, 
 \[ 
  \left(\begin{matrix} 
      *  &  *   &      *     &\dotsc &   *   &   *  & I_{a_1}\\ 
      *  &  *   &      *     &\dotsc &   *   &I_{a_2}&   0   \vspace{-4pt}   \\ 
   \vdots&\vdots&   \vdots   &\iddots&\iddots&   0  &   0   \\ 
      *  &  *   &      *     & I_{a_j}&   0   &   0  &   0   \\ 
      *  & I_{k}&0_{k,a_{j+1}-k}&   0   &\dotsb &   0  &   0 
   \end{matrix}\right)_{i\times n}\ . 
 \] 
Here, \defcolor{$0_{k,a_{j+1}-k}$}  is the zero matrix with $k$ rows and $a_{j+1}{-}k$ columns, and the first column 
block  has size $n-n_{j+1}$. 
The Pl\"ucker coordinate $x_{[i]}$ is the determinant of the first $i$ columns of this matrix. 
Use Laplace expansion on the last $k$ rows to get 
\[ 
  x_{[i]}\ =\ \sum_{J\in\binom{[i]}{k}} 
  x_{[i]\smallsetminus J} \cdot z_J\ , 
\] 
where \defcolor{$z_J$} is the $J$th minor of the last $k$ rows, 
$\begin{pmatrix}* & I_k& 0_{k,a_{j+1}-k}&   0  & \dotsb \end{pmatrix}$. 
Its last nonzero column is in position $n{-}n_{j+1}{+}k$, so we may assume that $J\subset[l]$, 
as otherwise $z_J=0$. 
 
If we consider the form of the matrix~\eqref{Eq:OlocalCoords} (specifically, its first $n_{j+1}$ rows), then we see that 
$z_J = \pm x_{J,(n-n_{j+1}+k,n]}$, as the columns in the final positions in $(n-n_{j+1}+k,n]$ all end with a 1 in 
rows $1,\dotsc,n_j, n_j{+}k{+}1,\dotsc,n_{j+1}$. 
Rather than compute the sign, we note that the sign does not depend upon $J$, but only on $\ndot$ and $i$. 
Hence, $\prP(X_{s_i}^{w_0w_P})$ satisfies the formula~\eqref{Eq:ProjectedRichardson}, and then this completes the proof, by 
noting that the hypersurface of $G/P$ defined by \eqref{Eq:ProjectedRichardson} is irreducible. 
 
The arguments for cases (2), (3), (4) are similar and much simpler. 
Case  (1) follows from case (2) by noting  $\prP(X^{w_0s_i w_P}_{\id})=w_0 \prP(X^{w_0w_P}_{s_i})$ for $i\in \ndot$. 
\end{proof} 
 
\begin{example} 
 For $\Fl(3,6; 7)$, $\prP(X^{w_0w_P}_{s_4})$ is given by $x_{234}x_{134567}-x_{134}x_{234567}=0,$ 
 and  $\prP(X^{w_0w_P}_{s_5})$ is given by $x_{145}x_{234567}-x_{245}x_{134567}+x_{345}x_{124567}=0$. 
 \hfill$\diamond$ 
\end{example}

\section{Fundamental group of the complement of $-K_{\Flndot}$ in  $\Flndot$} 
\label{S:MainResult} 
 
To  study $\Flndot\smallsetminus(-K_{\Flndot})$, we first remove the Schubert divisors (1) in 
Theorem~\ref{thmequantion}. 
These are given by the Pl\"ucker coordinates $x_{(n-n_j, n]}$ for $n_j\in\ndot$, and 
correspond to the second sum in Proposition~\ref{propanti22}. 
This leaves the dense Schubert cell of $\Flndot$, which is identified with $\mathring {X}^{w_0w_P}$, 
and is parameterized by the coordinates~\eqref{Eq:OlocalCoords}. 
Let $\defcolor{N}:=\ell(w_0w_P)$ so that $\mathring{X}^{w_0w_P}\simeq\CC^N$, and let 
$\defcolor{\PP^N}:=\CC^N\sqcup\PP(\CC^N)$ be its projective completion. 
 
For any subvariety $D$ of $\Flndot$ which meets the cell $\mathring{X}^{w_0w_P}$, we also write $D$ for its 
closure in $\PP^N$. 
Write \defcolor{$D_0$} for the hyperplane $\PP(\CC^N)$ at infinity and for $i=1,\dotsc,n{-}1$, let 
\defcolor{$D_i:=\prP(X^{w_0w_P}_{s_i})\cap \mathring X^{w_0w_P}$} be 
the image of a projected Richardson variety that meets $\mathring{X}^{w_0w_P}$. 
Set $\defcolor{D}:=D_0\cup D_1\cup\dotsb\cup D_{n-1}$, a divisor in $\PP^N$. 
 
\begin{thm} 
  The fundamental group of $\Flndot\smallsetminus(-K_{\Flndot})$ is $\ZZ^{n-1}$. 
\end{thm}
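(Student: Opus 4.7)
The plan is to translate the question into a classical plane-curve-complement calculation and then compute $\pi_1$ by iterating the Oka--Sakamoto theorem, following the heuristic sketched in Section~\ref{S:PiOne}. Removing from $\Flndot$ the projected Schubert divisors in the second sum of Proposition~\ref{propanti22} (that is, the Pl\"ucker hyperplanes $x_{(n-n_j,n]}=0$ of case (1) of Theorem~\ref{thmequantion}) leaves the open cell $\mathring X^{w_0w_P}\cong\CC^N$ with coordinates~\eqref{Eq:OlocalCoords}. What remains to be removed is $D_1\cup\dotsb\cup D_{n-1}$. Adding $D_0$ at infinity, we have $\Flndot\smallsetminus(-K_{\Flndot})=\PP^N\smallsetminus D$, where $D=D_0\cup D_1\cup\dotsb\cup D_{n-1}$, and the goal becomes $\pi_1(\PP^N\smallsetminus D)=\ZZ^{n-1}$.

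First, I would apply the Zariski--Lefschetz theorem (Proposition~\ref{ZariskiTHM}) to a generic two-plane $\Lambda\subset\PP^N$, reducing the problem to computing $\pi_1(\Lambda\smallsetminus D)$. Writing $C_i:=D_i\cap\Lambda$ for $i=0,1,\dotsc,n-1$, one verifies that $C_0$ is a line in $\Lambda\cong\PP^2$ and that each $C_i$ for $i\geq 1$ is a smooth irreducible plane curve. Smoothness follows from Bertini applied to the smooth locus of $D_i$, using that projected Richardson varieties are normal~\cite{KLS} so their singular loci have codimension at least two and are therefore missed by a generic two-plane. Irreducibility of $D_i$ is a statement about projected Richardson hypersurfaces that is presumably handled by the deferred Lemma~\ref{lemma:Irr}.

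Working in the affine chart $\Lambda\smallsetminus C_0\cong\CC^2$, I would then verify two transversality conditions: that the projective closure of each $C_i$ meets the line $C_0$ at infinity transversally, which by Proposition~\ref{proppi3} yields $\pi_1(\CC^2\smallsetminus C_i)=\ZZ$; and that for each pair $i\neq j$ with $i,j\geq 1$, the projective curves $C_i$ and $C_j$ meet transversally in $\Lambda$. Granting these, an induction using Proposition~\ref{proppi2}, whose hypothesis on the pair $(C_1\cup\dotsb\cup C_k,\,C_{k+1})$ reduces by B\'ezout to the pairwise transversality above, yields
\[
   \pi_1\Bigl(\CC^2\smallsetminus\bigcup_{i=1}^{n-1}C_i\Bigr)\ =\ \prod_{i=1}^{n-1}\pi_1(\CC^2\smallsetminus C_i)\ =\ \ZZ^{n-1}.
\]

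The main obstacle is establishing pairwise generic transversality of the divisors $D_i$ inside $\PP^N$, equivalently that $D_i\cap D_j$ is a reduced subscheme of codimension two whose generic point is smooth on both $D_i$ and $D_j$; once this is known, transversality of the curves $C_i$ on a generic $\Lambda$ follows from Bertini and B\'ezout. When $D_i$ and $D_j$ are both Pl\"ucker coordinate hyperplanes of types (2)--(4) of Theorem~\ref{thmequantion} transversality is essentially automatic, but the delicate case arises when one or both of $D_i,D_j$ is cut out by a bilinear quadric of type~\eqref{Eq:ProjectedRichardson}. I would attack this by computing the Jacobians of the defining equations directly on the cell~\eqref{Eq:OlocalCoords} at carefully chosen smooth points of $D_i\cap D_j$, exploiting the torus action on $\Flndot$ to reduce to a single representative on each irreducible component of the intersection.
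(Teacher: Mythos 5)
Your overall strategy coincides with the paper's: remove the Pl\"ucker-hyperplane components to land in the big cell $\mathring X^{w_0w_P}\cong\CC^N$, compactify to $\PP^N$ by adding $D_0$ at infinity, slice with a generic two-plane via Proposition~\ref{ZariskiTHM}, and iterate Oka--Sakamoto. But two genuine gaps remain. First, the inductive application of Proposition~\ref{proppi2} to the pair $(C_1\cup\dotsb\cup C_k,\,C_{k+1})$ does \emph{not} ``reduce by B\'ezout to pairwise transversality'': it requires that $(C_1\cup\dotsb\cup C_k)\cap C_{k+1}$ consist of exactly $\bigl(\sum_{i\le k}\deg C_i\bigr)\deg C_{k+1}$ affine points, and that count fails if some point lies on three of the curves (the sets $C_i\cap C_{k+1}$ must be pairwise disjoint) or if $\overline{C_i}$ and $\overline{C_{k+1}}$ meet on the line at infinity. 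So you also need $C_i\cap C_j\cap C_k=\emptyset$ for all distinct $i,j,k\in\{0,\dotsc,n{-}1\}$, with $0$ included; the paper proves this by showing $D_i\cap D_j\cap D_k$ has codimension three, using the Richardson stratification for positive indices and coprimality of top homogeneous components when one index is $0$.

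Second, and more seriously, everything involving $D_0$ is asserted rather than proved, and that is where the real work lies. Transversality of each $\overline{C_i}$ to the line at infinity (the hypothesis of Proposition~\ref{proppi3}) and generic transversality of $D_0\cap D_i$ come down to properties of the homogeneous components of the determinantal polynomial $f^{(i)}$ cutting out $D_i$ in the cell coordinates~\eqref{Eq:OlocalCoords}: one must show $\topd(f^{(i)})$ is square-free, is coprime to the next-highest component, and is coprime to $\topd(f^{(j)})$ for $j\neq i$. That is precisely the content of Lemma~\ref{lemma:Irr}, whose proof occupies all of Section~\ref{secproof}; your proposal invokes that lemma only for irreducibility of the $D_i$, which is not its role, and offers no substitute. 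For the remaining pairwise transversality with $i,j>0$, your plan of Jacobian computations at chosen points is plausible but unexecuted; the paper instead gets this structurally from the fact that $X^{w_0w_P}_{s_i}\cap X^{w_0w_P}_{s_j}$ is a reduced union of Richardson varieties and that this intersection commutes with $\prP$. As written, your argument is a correct outline with its hardest steps still open.
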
 
 
\begin{proof} 
 Since $\PP^N{\smallsetminus}D \simeq \Flndot\smallsetminus(-K_{\Flndot})$, we study the fundamental group of 
 the hypersurface complement $\PP^N\smallsetminus D$. 
 Let $\Lambda\subset\PP^N$ be a general two-plane. 
 For $i=0,\dotsc,n{-}1$, set $\defcolor{C_i}:=\Lambda\cap D_i$ and 
 set $\defcolor{C}:=\Lambda\cap D$, which are curves, as $\Lambda$ is general. 
 We claim that: 
 \begin{enumerate} 
  \item  Each curve $C_i$ is smooth. 
  \item  For $i\neq j$, then intersection $C_i\cap C_j$ is transverse. 
  \item  For $i,j,k$ distinct $C_i\cap C_j\cap C_k=\emptyset$. 
 \end{enumerate} 
 Given these claims, Propositions~\ref{proppi2} and~\ref{proppi3} imply that 
 $\pi_1(\Lambda{\smallsetminus}C)=\ZZ^{n-1}$, and Proposition \ref{ZariskiTHM} implies 
 $\pi_1(\PP^N\smallsetminus D)=\ZZ^{n-1}$, which implies the theorem. 
 
 By Bertini's Theorem and the genericity of $\Lambda$, these three properties of the curves $C_i$ are consequences of the 
 following three properties of the divisors $D_i$. 
\begin{enumerate} 
  \item  Each $D_i$ is smooth in codimension 1. 
  \item  For $i\neq j$, the intersection $D_i\cap D_j$ is generically transverse. 
  \item  For $i,j,k$ distinct, the intersection $D_i\cap D_j\cap D_k$ has codimension three. 
\end{enumerate} 
 
The hyperplane $D_0$ at infinity in $\PP^N$ is smooth. 
For $0<i$ the intersection $D_i\cap \CC^N$ with the complement of $D_0$ is 
isomorphic to an open part of the projected  Richardson variety $\prP(X^{w_0w_P}_{s_i})$ in $\Flndot$. 
Projected Richardson varieties are  normal~\cite{BiCo, Brion, KLS}, and thus smooth in codimension 1. 
Therefore, the first property is satisfied. 
 
For the second, we notice that for $0<i<j$, the intersection $X_{s_i}\cap X_{s_j}$ is given by 
$X_{s_is_j}$ if $j>i+1$, or $X_{s_is_j}\cup X_{s_js_i}$ if $j=i+1$, and in either case the intersection is reduced. 
%
%
%
%
It follows from the defining equations that 
$\prP(X^{w_0w_P}_{s_i})\cap \prP(X^{w_0w_P}_{s_j})= \prP(X^{w_0w_P}_{s_i} \cap  X^{w_0w_P}_{s_j})$, and hence the 
intersection is given by $\prP(X^{w_0w_P}_{s_is_j})$ if $j>i+1$, or 
$\prP(X^{w_0w_P}_{s_is_j})\cup \prP(X^{w_0w_P}_{s_js_i})$ if $j=i+1$. 
Thus the intersection  $D_i\cap D_j$ is generically transverse for $0<i<j$. 
 
To show that $D_0\cap D_i$ is generically transverse, we study the equations for $D_i$. 
The divisor $D_i$ is defined by the determinant $f^{(i)}$ of the upper left $i\times i$ submatrix of the local 
coordinates~\eqref{Eq:OlocalCoords}. 
Write $f^{(i)}$ as a sum of homogeneous pieces, 
\[ 
  f^{(i)}\ =\ f^{(i)}_{d_i}\ +\ f^{(i)}_{d_i-1}\ +\ \dotsb\ +\ f^{(i)}_0\,, 
\] 
where $\deg f^{(i)}_j = j$ and $\deg f^{(i)}= d_i$. 
If $z$ is a new homogenizing variable, so that $z=0$ defines the hyperplane $D_0$  at infinity in $\PP^N$, then 
$D_i$ is defined in $\PP^N$ by 
 \begin{equation}\label{Eq:homogenize} 
  f_{d_i}^{(i)}\ +\ zf_{d_i-1}^{(i)}\ +\ z^2 f_{d_i-2}^{(i)}\ +\ \dotsb\ +\ z^{d_i}f_0^{(i)}\,. 
 \end{equation} 
 
\begin{lemma}\label{lemma:Irr} 
  With these definitions, we have the following. 
   \begin{enumerate} 
   \item  For each $i\in [n{-}1]$, the top homogeneous component  $f^{(i)}_{d_i}$ of $D_i$ is square-free. 
       If $f^{(i)}$ is inhomogeneous, then its second highest homogeneous component $f^{(i)}_{d_i-1}$  is 
       nonzero and coprime to $f^{(i)}_{d_i}$. 
  \item  For $i, j\in [n{-}1]$ with $i\neq j$, the top homogeneous components of $f^{(i)}$ and $f^{(j)}$ are coprime. 
\end{enumerate} 
\end{lemma} 
 
We will prove this in Section \ref{secproof} and assume it for now. 
Then $D_0\cap D_i=\calV(z,f^{(i)})$ is defined in $D_0$ by the top homogeneous component  $f^{(i)}_{d_i}$  of $f^{(i)}$. 
Since  $f^{(i)}_{d_i}$ is square-free, $D_0\cap D_i$ is reduced in the plane $D_0$. 
When  $f^{(i)}$ is homogeneous, this shows that the intersection is generically transverse. 
When $f^{(i)}$ is inhomogeneous, the intersection will be nontransverse where $\calV(f^{(i)}_{d_i})$ is singular, and at 
points of $\calV(f^{(i)}_{d_i},f^{(i)}_{d_i-1})$. 
Since $f^{(i)}_{d_i}$ and $f^{(i)}_{d_i-1}$ are coprime, we see again that the intersection is generically transverse. 
 
Consider now the final point, that for $i<j<k$, $D_i\cap D_j\cap D_k$ has codimension three. 
If  $i\neq 0$, then this  follows from the same fact about the Richardson divisors. 
We may also see this from the defining equations, which give the following four cases.  
\newcommand{\pprop}{\raisebox{-6pt}{\rule{0pt}{17pt}}} 
\[ 
  \begin{array}{|c|l|}\hline 
   i,j,k &{}\qquad D_i\cap D_j\cap D_k\\\hline 
   i<j-1<k-2&\prP(X^{w_0w_P}_{s_is_js_k})\pprop\\\hline 
   i=j-1<k-2&\prP(X^{w_0w_P}_{s_is_js_k})\cup \prP(X^{w_0w_P}_{s_js_is_k})\pprop\\\hline 
   i<j-1=k-2&\prP(X^{w_0w_P}_{s_is_js_k})\cup \prP(X^{w_0w_P}_{s_is_ks_j})\pprop\\\hline 
   i=j-1=k-2&\prP(X^{w_0w_P}_{s_is_js_k})\cup \prP(X^{w_0w_P}_{s_js_is_k})\cup 
              \prP(X^{w_0w_P}_{s_ks_is_j})\cup \prP(X^{w_0w_P}_{s_ks_js_i})\pprop\\\hline 
   \end{array} 
\] 
 If $i=0$, then this holds as $f^{(j)}_{d_j}$ and $f^{(k)}_{d_k}$ are coprime. 
\end{proof} 
 
\section{Proof of Lemma~\ref{lemma:Irr}}\label{secproof}

Let $M$ be a principal $a\times a$ submatrix of~\eqref{Eq:OlocalCoords}. 
We will later show that its determinant equals the determinant of a matrix with a block 
form~\eqref{eqnmatrix} described below. 
Consequently, we first investigate the factorization of the top homogeneous component of the determinant of such a matrix, 
and use that to deduce Lemma~\ref{lemma:Irr}. 
Until we deduce Lemma~\ref{lemma:Irr} at the end of this section, all symbols, $N$, $r$, etc.\ will have different meanings 
than in Sections~\ref{S:KLS_Equations} and~\ref{S:MainResult}. 
We start with a well-known fact, as we will use similar arguments later. 
 
\begin{lemma}\label{lemma1} 
   The determinant $\det\big(x_{ij}\big)_{a\times a}$ of a matrix of indeterminates is irreducible. 
\end{lemma}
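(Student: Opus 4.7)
The plan is to prove this by induction on $a$. The base case $a=1$ is trivial since $\det(x_{11}) = x_{11}$ is already irreducible.

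For the inductive step with $a \geq 2$, the key observation is that $\det(x_{ij})_{a\times a}$ has degree exactly $1$ in the entry $x_{11}$, with leading coefficient equal to the $(1,1)$-minor
\[
   M_{11}\ :=\ \det(x_{ij})_{2 \leq i,j \leq a}\,,
\]
which is itself the determinant of an $(a{-}1)\times(a{-}1)$ matrix of independent indeterminates and hence irreducible by the inductive hypothesis. First I would assume a nontrivial factorization $\det(x_{ij}) = fg$ with $f,g$ nonconstant and use $x_{11}$-linearity to arrange, without loss of generality, that $g$ is independent of $x_{11}$ and $f = \alpha x_{11} + \beta$ with $\alpha,\beta$ also independent of $x_{11}$. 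Matching $x_{11}$-coefficients yields $\alpha g = M_{11}$. Since $g$ is nonconstant and $M_{11}$ is irreducible, $\alpha$ must be a nonzero constant, and after rescaling I may take $g = M_{11}$ and $f = x_{11} + \beta$.

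The main obstacle is then ruling out the relation $\det(x_{ij}) = (x_{11} + \beta) M_{11}$. For this I would compare the coefficients of $x_{21}$ on each side. On the left, Laplace expansion along the first column gives $x_{21}$-coefficient equal to $-M_{21}$, where $M_{21}$ is the $(2,1)$-minor, namely the determinant of the submatrix with row $2$ and column $1$ deleted. On the right, $M_{11}$ does not involve $x_{21}$, so the $x_{21}$-coefficient equals $(\partial \beta/\partial x_{21}) \cdot M_{11}$. Thus $M_{11}$ would have to divide $M_{21}$ in $\mathbb{Z}[x_{ij}]$. However, $M_{11}$ involves the variable $x_{22}$ while $M_{21}$ does not (row $2$ has been deleted), so the only possibility is $\partial \beta/\partial x_{21} = 0$, which then forces $M_{21} = 0$ --- a contradiction, as $M_{21}$ is a nonzero polynomial. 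This completes the induction.
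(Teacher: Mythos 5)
Your proof is correct, but it takes a genuinely different route from the paper's. The paper argues directly, with no induction: since $\det(x_{ij})$ has degree one in \emph{every} variable, in a factorization $g=pq$ each variable lies in exactly one factor; if $x_{11}$ lies in $p$ while some $x_{1j}$ lies in $q$, then a monomial of $pq$ divisible by $x_{11}x_{1j}$ survives with nonzero coefficient, which is impossible because no term of the determinant uses two entries from the same row. Propagating this through row $1$, then column $1$, then the remaining entries puts every variable into $p$, so $q$ is constant. You instead induct on $a$: matching $x_{11}$-coefficients forces $g$ to be an associate of the irreducible minor $M_{11}$, and the $x_{21}$-coefficient identity $\beta_1 M_{11}=-M_{21}$ is then killed by the variable $x_{22}$. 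Both arguments are sound. The paper's version is shorter and, more to the point, its variable-tracking mechanism is exactly what gets reused later (in the proof of Lemma~\ref{lemma2}, to show each cofactor $\hat{C}_L$ is irreducible) in a setting where your inductive minor structure is not available. Your version has the virtue of isolating the role of the leading coefficient, but you should state explicitly the small step that $\beta$ has degree at most one in $x_{21}$ (forced by $\deg_{x_{21}}\det=1$ together with $M_{11}\neq 0$) before reading off its $x_{21}$-coefficient, and note that $M_{21}\neq 0$ because it is again a determinant of distinct indeterminates.
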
 
\begin{proof} 
 Let $g$ be this determinant, and note that it has degree one in every variable $x_{ij}$. 
 Suppose that $g=pq$. 
 We may assume that $x_{11}$ appears in $p$, so that $p$ is of degree one in $x_{11}$. 
 Then $x_{1j}$ appears in $p$ for all $j$, for otherwise $x_{1j}$ appears in $q$, which implies that $x_{11}x_{1j}$ appears 
 in $g$, which is a contradiction. 
 Similarly, $x_{j1}$ appears in $p$, and then similar arguments show that each $x_{jk}$ appears in $p$. 
 Consequently,  $q$ is constant.	 
\end{proof} 

For sequences $\defcolor{\ibull}:=(i_1,i_2,\ldots,i_r)$ and $\defcolor{\jbull}:=(j_1,j_2,\ldots,j_r)$ of positive 
integers with $|\ibull|=|\jbull|=N$, consider a matrix of the following form, 
 \begin{equation}\label{eqnmatrix} 
  \defcolor{M(\ibull; \jbull)}\ :=\ \left(\begin{matrix} 
     *   & \dotsb &\dotsb &   *   &    *  & A^1 \\ 
     *   & \dotsb &\dotsb &   *   &  A^2  & I_{i_2,j_1}\vspace{-3pt}  \\ 
  \vdots & \dotsb &\iddots&\iddots&\iddots&0 \\ 
  * &\dots & A^s &I_{i_s, j_{s-1}} &0 &0 \vspace{-3pt} \\ 
  \vdots &\iddots&\iddots&\iddots &0&0      \\ 
    A^r  &I_{i_r, j_{r-1}}& 0 &\dotsb &0 &0 
 \end{matrix}\right)_{N\times N}. 
\end{equation} 
Here, \defcolor{$I_{c, d}$} is a $c\times d$ matrix with 1s on its diagonal and 0s elsewhere, 
the blocks \defcolor{$A^r$} are $i_r\times j_r$ matrices of indeterminates, and every $*$ 
denotes another matrix of indeterminates. 
As the entries of $M(\ibull; \jbull)$ that are not specified to be 0 or 1 are different indeterminates, all properties of 
its determinant $\defcolor{g}=\defcolor{g(\ibull;\jbull)}$ depend only upon the sequences $\ibull$ and $\jbull$. 
This includes whether or not $g=0$, its degree, its irreducibility and factorization, as well as the same properties of its 
top degree homogeneous component. 
 
We need not determine whether $g=0$, or if it is irreducible, or its degree. 
We do study the factorization of its top degree homogeneous component. 
For this, we set 
\[ 
  \defcolor{\Upsilon(\ibull;\jbull)}\ :=\ 
  \{s\in [r{-}1]\mid i_1+\cdots+i_s=j_1+\cdots +j_s\}\,. 
\] 
We show that this set controls the factorization of the top homogeneous component 
of the determinant $g$ of $M$. 
For a polynomial $f$, let \defcolor{$\topd(f)$} be the top homogeneous component of $f$ and 
\defcolor{$\snd(f)$} be the homogeneous component of $f$ of degree $\deg(f){-}1$.

\begin{lemma} \label{lemma2} 
  Let $g$ be the determinant of the matrix $M(\ibull; \jbull)$~\eqref{eqnmatrix}. 
  Assume that $g$ is irreducible and nonzero. 
  Then  $\topd(g)$  is reducible if and only if $\Upsilon(\ibull; \jbull)\neq \emptyset$. 
 \end{lemma}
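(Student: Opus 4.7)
My plan is to exploit the block structure of $M=M(\ibull;\jbull)$. For each index $s\in\{1,\dots,r-1\}$, decompose
\[
M\ =\ \begin{pmatrix} X_s & Y_s \\ Z_s & W_s \end{pmatrix},
\]
where $Y_s$ is the submatrix on the top $I_s:=i_1+\cdots+i_s$ rows and the rightmost $J_s:=j_1+\cdots+j_s$ columns, $Z_s$ is the complementary submatrix on the bottom $N-I_s$ rows and leftmost $N-J_s$ columns, and $X_s,W_s$ are the two off-diagonal blocks. When $s\in\Upsilon(\ibull;\jbull)$ (so that $I_s=J_s$), both $Y_s$ and $Z_s$ are square, each of the form $M(\ibull';\jbull')$ for the subsequences $(i_1,\dots,i_s;\,j_1,\dots,j_s)$ and $(i_{s+1},\dots,i_r;\,j_{s+1},\dots,j_r)$ respectively. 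Moreover $X_s$ contains only indeterminate entries, while $W_s$ vanishes everywhere except at the single identity block $I_{i_{s+1},j_s}$ located in row-block $s+1$ and column-block $s$.

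\textbf{Stage 1 ($\Leftarrow$: $\Upsilon\neq\emptyset$ implies $\topd(g)$ is reducible).}
Fix $s\in\Upsilon$. For any permutation $\sigma\in S_N$ that gives a nonzero term in $\det M$, split $\sigma$ according to the block partition into four pieces $\sigma_{TL},\sigma_{TR},\sigma_{BL},\sigma_{BR}$, and let $\alpha$ be the number of ``cross'' assignments. Entries hit by $\sigma_{TL}$ lie in $X_s$ and are indeterminates, while $\sigma_{BR}$ is forced to use $1$-entries of the identity block in $W_s$ (else the term vanishes). The key estimate is an extension argument: augmenting $\sigma_{TR}$ by matching the $\alpha$ removed top rows to the $\alpha$ removed columns of column-block $s$ yields a perfect matching of $Y_s$, and the $\alpha$ added edges all land in the leftmost column-block of $Y_s$, which is all-indeterminate (stars above and $A^s$ on the anti-diagonal). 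Writing $v_{TR}$ for the number of indeterminate entries picked by $\sigma_{TR}$, we obtain $v_{TR}+\alpha\leq\deg\det(Y_s)$. A symmetric extension for $\sigma_{BL}$ inside $Z_s$ uses that the topmost row-block of $Z_s$ (coming from row-block $s+1$ of $M$) is entirely indeterminate, giving $v_{BL}+\alpha\leq\deg\det(Z_s)$. Summing, $\deg\sigma=\alpha+v_{TR}+v_{BL}\leq\deg\det(Y_s)+\deg\det(Z_s)-\alpha$, with equality only when $\alpha=0$. The $\alpha=0$ contributions assemble precisely into $\pm\det(Y_s)\det(Z_s)$, so $\topd(g)=\pm\,\topd(\det Y_s)\cdot\topd(\det Z_s)$. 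Both factors are nonconstant since each of $Y_s,Z_s$ contains an $A^k$-block of indeterminates, yielding a nontrivial factorization.

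\textbf{Stage 2 ($\Rightarrow$: $\Upsilon=\emptyset$ implies $\topd(g)$ is irreducible).}
I would argue by induction on $r$. The base case $r=1$ is immediate, since then $g=\det A^1$ is irreducible by Lemma~\ref{lemma1} and equals its own top-degree part. For $r\geq 2$ with $\Upsilon=\emptyset$, single out a strategic indeterminate $v$, for instance an entry of $A^1$ in the upper-right corner of $M$, and use that $\topd(g)$ is linear in $v$ to write $\topd(g)=vf_1+f_0$. The polynomials $f_1$ and $f_0$ are top-degree components of determinants of sub-matrices obtained by appropriate row and column deletions, each of which is itself of the form $M(\ibull';\jbull')$ with strictly smaller parameters. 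A combinatorial check should confirm that $\Upsilon$ remains empty for these reduced shapes, so the inductive hypothesis yields the irreducibility of $f_1$ and $f_0$. A specialization argument (for example, zeroing out selected entries of another $A^k$-block) then shows $f_1,f_0$ are coprime, so $\topd(g)=vf_1+f_0$ is irreducible as a polynomial linear in $v$.

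\textbf{Main obstacle.} The harder direction is Stage 2. Stage 1 reduces to a single clean matching-extension inequality that is uniform across shapes. Stage 2, by contrast, must navigate a combinatorially rich family of shapes $(\ibull;\jbull)$ with $\Upsilon=\emptyset$, and the crux is to locate a distinguished variable $v$ whose elimination reduces the problem to sub-shapes on which $\Upsilon$ remains empty so that the induction closes. Establishing the coprimality of $f_1$ and $f_0$ requires a delicate specialization argument, and the standing hypothesis that $g$ itself is irreducible is most likely invoked precisely here to exclude degenerate configurations.
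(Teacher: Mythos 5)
Your Stage 1 is correct and is in substance the paper's own sufficiency argument: the paper organizes the same degree count as a Laplace expansion along the first $i_1+\cdots+i_s$ rows, noting that every term other than $\pm\det(Y_s)\det(Z_s)$ forces a column of the complementary minor into the block $\left(\begin{smallmatrix}I&0\\ 0&0\end{smallmatrix}\right)$ and hence has strictly smaller degree. (One point you gloss over, as does the paper to a lesser extent: you should justify that $\det Y_s$ and $\det Z_s$ are nonzero and nonconstant before concluding that the factorization is nontrivial; expanding each along its first row block, which consists entirely of indeterminates, does this.)

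Stage 2, however, has a genuine gap at its crux. Writing $\topd(g)=vf_1+f_0$ for a corner entry $v$ of $A^1$, the cofactor $f_1$ is indeed (when nonzero) the top component of a determinant of a smaller matrix of the form~\eqref{eqnmatrix} with $\Upsilon$ still empty; but $f_0=\topd(g)\vert_{v=0}$ is \emph{not}: setting a single entry of $A^1$ to zero destroys the structure~\eqref{eqnmatrix}, so your induction hypothesis says nothing about $f_0$ --- neither its irreducibility nor, what actually matters, the coprimality of $f_1$ and $f_0$. Since $vf_1+f_0$ is irreducible exactly when $f_1\nmid f_0$ (given $f_1\neq 0\neq f_0$), the whole content of the necessity direction sits in the coprimality step that you defer to ``a delicate specialization argument''; you also never verify that $v$ occurs in $\topd(g)$ at all (so that $f_1\neq 0$), nor that $f_0\neq 0$, nor where the standing irreducibility of $g$ is used. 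The paper's device is to avoid a single distinguished variable: it expands along the last $i_r$ rows (using $i_r\neq j_r$, a consequence of $\Upsilon=\emptyset$), observes that all cofactors $\hat C_L$ share the one shape $M(i_1,\dots,i_{r-1};\,j_1,\dots,j_{r-2},j_{r-1}{+}j_r{-}i_r)$, proves they are irreducible (this is where the irreducibility of $g$ enters) with irreducible top components by induction, and then, given $\topd(g)=pq$, shows every entry of $A^r$ lies in $p$ and specializes the entire block $A^r$ to the matrices $y_L$ to get $\topd(\hat C_L)=p(y_L)\,q$; comparing two choices of $L$, whose cofactors involve different indeterminates, forces $q$ to be constant. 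To close your argument you would need to replace the single variable $v$ by a whole anti-diagonal block and exploit this family of specializations simultaneously, which is precisely the paper's proof.
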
 
 \begin{proof} 
 Suppose that $\Upsilon(\ibull; \jbull)\neq\emptyset$. 
 Let $s\in\Upsilon(\ibull; \jbull)$ and observe that removing $I_{i_{s+1}, j_s}$ from~\eqref{eqnmatrix}  gives a 
 block upper left triangular matrix, $\left(\begin{smallmatrix}*&*\\ {*}&0\end{smallmatrix}\right)$. 
 Using Laplace expansion of $g$ along the first $i_1+\dotsb+i_s$ rows of $M(\ibull; \jbull)$, gives 
\begin{align*} 
  g\ =\ &\pm 
    \left|\begin{matrix}{} 
    \dotsb&*&* & A^{s+1} \\ 
    * &*&A^{s+2} & I_{i_{s+2},j_{s+1}}  \\ 
    \vdots &\iddots &\iddots &0 \\ 
     A^r&I_{i_r,j_{r-1}} &0 &0 
   \end{matrix}\right|\cdot 
   \left|\begin{matrix}{} 
     \dotsb&*&* & A^1 \\ 
     * &*&A^2 & I_{i_2,j_1}  \\ 
     \vdots &\iddots &\iddots &0 \\ 
     A^s &I_{i_s,j_{s-1}} &0 &0 
     \end{matrix}\right| \\ 
  & \  +\  \mbox{(other terms). \rule{0pt}{14pt}} 
 \end{align*} 
 In the other terms, at least one column of the first minor is from the lower right submatrix 
 \[ 
   \left(\begin{matrix} {}I_{i_{s+1}, j_s}& 0\\0&0 \end{matrix}\right)\ , 
 \] 
 and thus its degree is strictly less than that of the first minor in the first term. 
 Indeed, the minor is zero if any column is zero, and if not, then expanding that minor along the rows  containing 1s from 
 $I_{i_{s+1}, j_s}$ shows that its degree drops by the number of such rows/columns. 
 However, the second minor has the same degree as the second minor in the first term (as they have 
 the same format $M(i_1,\dotsc,i_s;j_1,\dotsc,j_s)$). 
 Since we assumed that $g\neq 0$, these second minors are all nonzero, and we conclude that the degree of the other terms 
 is strictly less than that of the first term. 
 Therefore, $\topd(g)$ is given by the product of top homogeneous components of the two minors in the first term of $g$, 
 neither  of which is a  constant (we see this by Laplace expansion along their first rows of indeterminates). 
 Thus $\Upsilon(\ibull; \jbull)\neq \emptyset$ is sufficient for 
 the  reducibility of $\topd(g)$. 
 
 We use induction on $r$ for necessity. 
 If $r=1$, then we are done by Lemma~\ref{lemma1}. 
 Suppose that for any sequences $\ibull$ and $\jbull$ of length $s<r$ with 
 $i_1+\dotsb+i_s=j_1+\dotsb+j_s$, if  $g(\ibull;\jbull)$ is irreducible and $i_1+\dotsb+i_t\neq j_1+\dotsb+j_t$ for all $1\leq t < s$, then 
 $\topd(g(\ibull;\jbull))$ is irreducible. 
 
 Let $\ibull$ and $\jbull$ be sequences of length $r$ such that 
 $i_1+\dotsb+i_s\neq j_1+\dotsb+j_s$ for any $1\leq s<r$, but 
 $i_1+\dotsb+i_r = j_1+\dotsb+j_r=N$. 
 Note that this implies that $i_r\neq j_r$. 
 
 Assume $i_r< j_r$. 
 Consider the Laplace expansion of $g$ along the last $i_r$ rows of $M(\ibull;\jbull)$. 
 For each $L\in\binom{[N]}{i_r}$, write \defcolor{$C_L$} for the determinant of the square submatrix formed by the 
 columns from $L$ and the last $i_r$ rows, and let \defcolor{$\hat{C}_L$} be its cofactor (determinant of the square 
 submatrix formed by the columns from $[N]\smallsetminus L$ and the first $N-i_r$ rows, with the appropriate 
 sign). 
 If $\defcolor{b}:=\min\{i_r,j_{r-1}\}$, then 
 \begin{equation}\label{Eq:g-expansion} 
   g\ =\ \sum_{L\in\binom{[j_r+b]}{i_r}} C_L \hat{C}_L\ =\ 
   \sum_{L\in\binom{[j_r]}{i_r}} C_L \hat{C}_L \ +\  \mbox{ (other terms)}\,. 
 \end{equation} 
 (The first sum is restricted as these are the only nonzero columns in the last $i_r$ rows.) 
 In the second expression, the degree of each of the (other terms)  is strictly less than the degree of the 
 terms in the sum over $L\in\binom{[j_r]}{i_r}$. 
 Indeed, in each, the minor $C_L$ has degree $|L\cap[j_r]|<i_r$ as $L$ includes at least one column beyond the 
 $j_r$th. 
 Thus  these minors have smaller degree than those in the sum over $\binom{[j_r]}{i_r}$. 
 Also, each cofactor $\hat{C}_L$ in either expression is, up to a sign, the determinant of a matrix of the 
 form~\eqref{eqnmatrix} with indices 
 \begin{equation}\label{Eq:newIndices} 
   M(i_1, \ldots, i_{r-2}, i_{r-1}\,;\, j_1, \ldots, j_{r-2}, j_{r-1}{+}j_r{-}i_r)\,. 
 \end{equation} 
 Thus they are either all zero or all nonzero. 
 As $g\neq 0$, we have $\hat C_L\neq 0$ for all $L$ and they all have the same degree and are irreducible. 
 Indeed, suppose that for some $L$, $\hat{C}_L=pq$ factors with neither $p$ nor $q$ a constant. 
 Since $\hat{C}_L\neq 0$, every entry  in the lower left $i_{r-1}\times (j_{r-1}+j_{r}-i_{r})$ submatrix of the matrix for 
 $\hat{C}_L$ appears in $\hat{C}_L$, which we may see by Laplace expansion  along its last $i_{r-1}$ rows.  
 If one entry occurs in $p$, then the argument used in the proof of Lemma \ref{lemma1} implies that they all do, and no 
 such entry occurs in $q$. 
 But then $q$  depends only on the last $(j_1+\cdots+j_{r-2})$ columns of the matrix for $\hat{C}_L$. 
 Since all the $\hat{C}_L$ have the same form~\eqref{Eq:newIndices}, they are all reducible with the same factor $q$. 
 But this implies that $q$ divides $g$, contradicting the irreducibility of $g$.

 As each $C_L$ for $L\in \binom{[j_r]}{i_r}$ is homogeneous of degree $i_r$, we have 
 \[ 
   \topd(g)\ =\  \sum_{L\in\binom{[j_r]}{i_r}} C_L \cdot \topd(\hat{C}_L)\,. 
 \] 
 Each term of some minor $C_L$ occurs only in that minor, and therefore appears in $\topd(g)$. 
 In particular, every indeterminate entry $x_{st}$ of the matrix $A^r$ occurs in $\topd(g)$. 
 We note that for each $L$, $\topd(\hat C_L)$ is irreducible, by our induction hypothesis, as $\hat C_L$ is irreducible and the corresponding sequences 
 in~\eqref{Eq:newIndices} have length $r{-}1<r$ and unequal partial sums.

 Suppose that $\topd(g)=pq$ factors as a product of polynomials. 
 We may assume that $x_{11}$ appears in $p$. 
 Arguing as in the proof of Lemma  \ref{lemma1} shows that each entry of  $A^r$ appears in $p$, and none appears 
 in $q$.

 For $L\in \binom{[j_r]}{i_r}$, let \defcolor{$y_L$} be the specialization obtained by replacing 
 $A^r$ by a matrix whose only nonzero entries form the identity matrix in the columns of $L$. 
 Since $A_K(y_L)=\delta_{K,L}$, the Kronecker delta, if we evaluate $\topd(g)$ at this specialization, we obtain 
 \[ 
   \topd(\hat{C}_L)\ =\ \topd(g)(y_L)\ =\ p(y_L)\cdot q(y_L)\ =\ p(y_L)\cdot q\,. 
 \] 
 Since $\topd(\hat{C}_L)$ is irreducible, if $q$ is nonconstant, then $p(y_L)$ is a nonzero constant. 
 Thus for $K,L \in\binom{[j_r]}{i_r}$ with $K\neq L$, we have 
\[ 
   p(y_L)\cdot \topd(\hat{C}_{K})\ =\  p(y_K)\cdot \topd(\hat{C}_L)\,, 
\] 
 which is a contradiction, as $\topd(\hat{C}_{K})$ and  $\topd(\hat{C}_L)$ have different indeterminates. 
 (Expand $\hat{C}_L$ along a column of $K\smallsetminus L$, whose indeterminates do 
 not appear in $\hat{C}_{K}$.)

 Suppose that $i_r> j_r$. 
 We prove that $\topd(g)$ is irreducible by modifying the argument for the case $i_r<j_r$. 
 Since $g$ is nonzero and irreducible, the matrix $M(\ibull;\jbull)$ does not contain a 
 $l\times(N-l)$ submatrix of zeroes, 
 for any $l$ (containing such a submatrix implies that $M(\ibull;\jbull)$ is upper left triangular so that $g$ factors, and 
 a larger submatrix forces $g$ to be zero). 
 Considering the last $i_r$ rows of $M(\ibull;\jbull)$, this implies that $i_r<j_r{+}j_{r-1}$. 
 
 The lower left $i_r\times (j_r{+}j_{r-1})$-corner of  $M(\ibull;\jbull)$ is 
 $(A^r \  \ I_{i_r,j_{r-1}} )$. 
 This  has $j_r{+}b$ nonzero columns where $b=\min\{i_r,j_{r-1}\}$. 
 Let us reconsider the expansion~\eqref{Eq:g-expansion} of $g$, 
\[ 
   g\ =\ \sum_{L\in\binom{[j_r+b]}{i_r}} C_L \hat{C}_L\ =\ 
   \sum_{L\in\binom{[j_r+b]}{i_r}\ ,\  [j_r]\subset L} C_L \hat{C}_L \ +\  \mbox{ (other terms)}\,. 
\] 
 The cofactors $\hat{C}_L$ as before are nonzero, have the same degree, and are irreducible. 
 The degree of $C_L$ is $|L\cap [j_r]|$, so only the terms in the sum in the second expression contribute to $\topd(g)$. 
 The rest of the argument proceeds as before. 
 \end{proof} 

We deduce three corollaries from this proof. 
In all, $g=\det M(\ibull;\jbull)$ is assumed to be nonzero and irreducible. 
Suppose that $\Upsilon(\ibull;\jbull)=\{s_1<\dotsb<s_m\}\neq\emptyset$. 
Set $s_0:=0$ and $s_{m+1}:=r$. 
For $t=0,\dotsc,m$, let 
 \begin{equation}\label{Eq:M_t} 
  M(\ibull; \jbull)_t\ :=\ 
   \left(\begin{matrix}{} 
  *&\cdots &* & A^{1+s_t}\\                         
  * &\cdots &A^{2+s_t} & I_{i_{2+s_t},j_{1+s_t}}  \\   
  \vdots &\iddots &\iddots &0 \\ 
   A^{s_{t+1}}&I_{i_{s_{t+1}}, j_{s_{t+1}-1}} &0 &0    
   \end{matrix}\right)\ , 
 \end{equation} 
which is a square submatrix of $M(\ibull; \jbull)$. 

\begin{cor}\label{corred} 
  If $\Upsilon(\ibull; \jbull)\neq\emptyset$, 
  then the irreducible factorization of $\topd(g)$ is $f_0\dotsb f_m$, where 
  $f_t=\topd(\det( M(\ibull; \jbull)_t))$. 
\end{cor}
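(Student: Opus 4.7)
The plan is to iterate the factoring step from the sufficiency direction of Lemma \ref{lemma2}, and then apply its necessity direction to each block to obtain irreducibility of the individual factors. I will induct on $m = |\Upsilon(\ibull;\jbull)|$. The base case $m = 1$ is essentially established in the sufficiency half of Lemma \ref{lemma2}'s proof: Laplace expansion along the first $i_1 + \cdots + i_{s_1}$ rows of $M(\ibull;\jbull)$ shows that the ``block-diagonal'' term $(\det M_1) \cdot (\det M_0)$ strictly dominates in total degree, so $\topd(g) = \topd(\det M_0) \cdot \topd(\det M_1)$. For $m > 1$, choose $s_1 \in \Upsilon$, split $M$ at $s_1$; the right block has $\Upsilon$-set of size $m-1$ and is itself a matrix of the form \eqref{eqnmatrix}, so the inductive hypothesis applies and yields $\topd(g) = f_0 \cdots f_m$.

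It remains to show that each $f_t$ is irreducible and that no two are associate. Non-association is immediate since the $f_t$'s are polynomials in pairwise disjoint sets of indeterminates, by the block structure. For irreducibility of $f_t = \topd(\det M_t)$: by construction, $s_t$ and $s_{t+1}$ are consecutive in $\Upsilon \cup \{0, r\}$, so the index sequences defining $M_t$ have empty $\Upsilon$-set. Lemma \ref{lemma2} applied directly to $M_t$ then gives irreducibility of $\topd(\det M_t)$, provided $\det M_t$ is both nonzero and irreducible. Nonvanishing is immediate, since $\topd(g) \neq 0$ forces each factor $f_t$, and hence each $\det M_t$, to be nonzero.

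The main obstacle is thus establishing that each $\det M_t$ is irreducible. I propose proving the stronger auxiliary claim: whenever $\Upsilon(\ibull';\jbull') = \emptyset$, the full determinant $\det M(\ibull';\jbull')$ is itself irreducible. Induction on the number of blocks $r'$ should work. The base case $r' = 1$ is Lemma \ref{lemma1}, since $M$ is then a square matrix of indeterminates. For the inductive step, Laplace expand along the last $i_{r'}$ rows as in Lemma \ref{lemma2}'s necessity proof; each cofactor is, up to sign, the determinant of a matrix of the form \eqref{eqnmatrix} with $r'-1$ blocks and empty $\Upsilon$-set, hence is irreducible by the inductive hypothesis. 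A variable-occurrence argument in the spirit of Lemma \ref{lemma1}, applied to the entries of the block $A^{r'}$ which appear in every term of $C_L$ for $L \in \binom{[j_{r'}]}{i_{r'}}$, then shows that any purported factorization $\det M = pq$ must force all entries of $A^{r'}$ into one factor, leaving the other constant.

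Once the auxiliary claim is in hand, the Corollary follows at once: $\topd(g) = f_0 \cdots f_m$ is a product of pairwise non-associate irreducibles, which is the irreducible factorization.
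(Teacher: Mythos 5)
Your overall architecture is the paper's: iterate the degree-domination step from the sufficiency half of Lemma~\ref{lemma2} to obtain $\topd(g)=f_0\cdots f_m$, observe that the $f_t$ involve disjoint variables, and apply the necessity half to each anti-diagonal block $M(\ibull;\jbull)_t$, whose $\Upsilon$-set is empty. The gap is the auxiliary claim you introduce to supply the remaining hypothesis of the necessity direction: it is \emph{not} true that $\Upsilon(\ibull';\jbull')=\emptyset$ alone forces $\det M(\ibull';\jbull')$ to be irreducible. For $\ibull'=(1,1,2)$ and $\jbull'=(2,1,1)$ the partial sums $1,2$ versus $2,3$ never agree, so $\Upsilon(\ibull';\jbull')=\emptyset$, yet the matrix~\eqref{eqnmatrix} is
\[
\begin{pmatrix} x& y& a_1& a_2\\ z& w& 1& 0\\ d_1& 1& 0& 0\\ d_2& 0& 0& 0\end{pmatrix},
\]
whose determinant equals $a_2d_2$: nonzero but reducible. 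The culprit is the $1\times 3$ zero block in the last row; whenever $M(\ibull';\jbull')$ contains an $l\times(N'{-}l)$ zero submatrix ($N'$ its size), the determinant factors for trivial reasons, and emptiness of $\Upsilon$ does not rule this out. Your induction breaks at exactly this point: in the example the Laplace expansion along the last $i_{r'}$ rows has a single surviving term $C_L\hat C_L$, so the step ``leaving the other factor constant'' has no second, non-associate cofactor to play against, and $q=\hat C_L$ is a genuine nonconstant factor.

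This degeneracy is precisely what the paper excludes by invoking the irreducibility of the \emph{ambient} $g$ (for instance, the inequality $i_r<j_r+j_{r-1}$ in the case $i_r>j_r$ of the necessity proof is deduced from it). Accordingly, the paper's argument for the irreducibility of the pieces is relative, not absolute: a nonconstant factor of a cofactor $\hat C_L$ (and, in the corollary, of $\det M(\ibull;\jbull)_t$) is shown to be a common factor of all the cofactors in the relevant Laplace expansion and hence to divide $g$, contradicting the irreducibility of $g$. To repair your route you would need to add a nondegeneracy hypothesis (no $l\times(N'{-}l)$ zero submatrix) to the auxiliary claim, check that it is inherited by the cofactors appearing in your induction, and, crucially, check that each block $M(\ibull;\jbull)_t$ of an irreducible $M(\ibull;\jbull)$ satisfies it; that last verification is essentially the relative argument you were trying to bypass.
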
 
\begin{proof} 
  That $\topd(g)=f_0\dotsb f_m$ is a consequence of the proof of sufficiency in Lemma \ref{lemma2}. 
  The irreducibility of each $f_t$ is a consequence of the proof of necessity (using mathematical induction and arguing as for the irreducibility of $\hat C_L$ therein). 
\end{proof} 
 
\begin{remark}\label{R:reformulation} 
 When $s\in\Upsilon(\ibull; \jbull)$, let $m:=i_1+\dotsb+i_s$. 
 Then the matrix $M(\ibull; \jbull)$ has a block structure 
 \begin{equation}\label{Eq:Block} 
    \left(\begin{matrix} *&M\\M'&P\end{matrix}\right)\ , 
 \end{equation} 
 where $*$ is a $m\times(N{-}m)$ matrix of indeterminates, $M$ and $M'$ are structured matrices~\eqref{eqnmatrix} with 
 parameters 
 \[ 
   M\ =\ M(i_1,\dotsc,i_s; j_1,\dotsc,j_s) 
  \qquad 
   M'\ =\ M(i_{s+1},\dotsc,i_r; j_{s+1},\dotsc,j_r)\,, 
 \] 
 and $P$ is a  $(N{-}m)\times m$ matrix with block structure 
 $\left(\begin{smallmatrix}I&0\\ 0&0\end{smallmatrix}\right)$, where $I=I_{i_{s+1},j_{s}}$. 
 In particular, the $2\times 2$ submatrix on the anti-diagonal in rows $m,m{+}1$ 
 (and columns $n{-}m, n{-}m{-}1$) is  $\left(\begin{smallmatrix}*&*\\ *&1\end{smallmatrix}\right)$, where $*$ indicates an 
 indeterminate. 
 In particular, 
\[ 
   \topd(\det M(\ibull; \jbull))\ =\ \topd(\det M)\cdot\topd(\det M')\,. \eqno{\diamond} 
\] 
\end{remark} 

\begin{cor}\label{corVars} 
 Every indeterminate in each matrix $A^k$ for $k=1,\dotsc,r$ appears in $\topd(g)$. 
\end{cor}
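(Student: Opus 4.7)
I would prove Corollary~\ref{corVars} by induction on $r$, using the proof of Lemma~\ref{lemma2} together with Corollary~\ref{corred}. The base case $r=1$ is immediate: $M(\ibull;\jbull)=A^1$ is square (as $g\neq 0$ forces $i_1=j_1$), $g=\topd(g)=\det A^1$, and each entry of $A^1$ appears with nonzero cofactor coefficient. For the inductive step I would split on whether $\Upsilon(\ibull;\jbull)$ is empty.

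Suppose first that $\Upsilon(\ibull;\jbull)=\{s_1<\dotsb<s_m\}\neq\emptyset$. Corollary~\ref{corred} provides the irreducible factorization
\[
   \topd(g)\ =\ \prod_{t=0}^{m} \topd\bigl(\det M(\ibull;\jbull)_t\bigr)\,,
\]
where $M(\ibull;\jbull)_t=M(i_{s_t+1},\dotsc,i_{s_{t+1}};j_{s_t+1},\dotsc,j_{s_{t+1}})$ contains exactly the blocks $A^{s_t+1},\dotsc,A^{s_{t+1}}$ of $M(\ibull;\jbull)$. Since the top component of a product of polynomials with nonzero top components is the product of the top components, irreducibility of each $\topd(\det M(\ibull;\jbull)_t)$ forces each $\det M(\ibull;\jbull)_t$ itself to be irreducible. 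The inductive hypothesis then gives every indeterminate of each block $A^k$ with $s_t<k\leq s_{t+1}$ in $\topd(\det M(\ibull;\jbull)_t)$, and hence in the product $\topd(g)$; as $t$ varies these blocks exhaust $A^1,\dotsc,A^r$.

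Suppose instead that $\Upsilon(\ibull;\jbull)=\emptyset$, so that $i_r\neq j_r$ (else $r-1\in\Upsilon$). In the case $i_r<j_r$ (the case $i_r>j_r$ being analogous), the proof of Lemma~\ref{lemma2} furnishes the Laplace expansion
\[
   \topd(g)\ =\ \sum_{L\in\binom{[j_r]}{i_r}} C_L\cdot\topd(\hat C_L)\,,
\]
in which each $C_L$ is an $i_r\times i_r$ minor of $A^r$ supported in the columns of $L$, and each $\hat C_L$ has the form $M(i_1,\dotsc,i_{r-1};j_1,\dotsc,j_{r-2},j_{r-1}{+}j_r{-}i_r)$, already established there to be nonzero and irreducible. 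That proof also settles the conclusion for $A^r$. For $k\leq r-1$, the inductive hypothesis applied to any single $\hat C_L$ gives every entry of the new blocks $\tilde A^1=A^1,\dotsc,\tilde A^{r-2}=A^{r-2}$, and of $\tilde A^{r-1}\supset A^{r-1}$, in $\topd(\hat C_L)$. The promotion to $\topd(g)$ uses the non-cancellation observation that for distinct $L,L'$, the minors $C_L$ and $C_{L'}$ involve pairwise disjoint sets of columns of $A^r$, so a monomial of $C_L\cdot\topd(\hat C_L)$ containing a chosen indeterminate of some $A^k$ with $k\leq r-1$ cannot be canceled by any contribution from $L'\neq L$.

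The main obstacle is verifying irreducibility of each $\det M(\ibull;\jbull)_t$ in the $\Upsilon\neq\emptyset$ case so that the inductive hypothesis applies; this falls out cleanly from Corollary~\ref{corred} combined with multiplicativity of the top homogeneous component, after which the remainder is bookkeeping with expansions already assembled in the proof of Lemma~\ref{lemma2}.
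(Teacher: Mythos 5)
Your argument is correct and is essentially the paper's: the published proof is a one-line appeal to ``induction, using the same arguments as in the proof of necessity in Lemma~\ref{lemma2}'', and your write-up is exactly that induction, with the $\Upsilon\neq\emptyset$ case handled via Corollary~\ref{corred} (noting that irreducibility of $\topd(\det M(\ibull;\jbull)_t)$ forces irreducibility of $\det M(\ibull;\jbull)_t$, so the inductive hypothesis applies, and that the factors $f_t$ involve disjoint variable sets). Two small repairs: your claim that $C_L$ and $C_{L'}$ involve pairwise disjoint sets of columns of $A^r$ is false (e.g.\ $L=\{1,2\}$ and $L'=\{1,3\}$ share a column); the correct non-cancellation reason, already implicit in the proof of Lemma~\ref{lemma2}, is that every monomial of $C_L\cdot\topd(\hat C_L)$ uses exactly one variable from each of the last $i_r$ rows, located in the columns of $L$, so the monomial determines $L$. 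Also, the case $i_r>j_r$ is not purely ``analogous'': there each cofactor $\hat C_L$ contains only the columns of $A^{r-1}$ not removed by $L\smallsetminus[j_r]$, so one must vary $L$ to cover all of $A^{r-1}$ (possible because $i_r-j_r<\min\{i_r,j_{r-1}\}$), whereas for $i_r<j_r$ a single $L$ already yields a block containing all of $A^{r-1}$.
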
 
\begin{proof} 
 This can be proven by induction on $k$, using the same arguments as in the proof of necessity in Lemma \ref{lemma2}. 
\end{proof} 
 
\begin{cor} \label{corTop} 
 If $r=1$, then $g=\topd(g)$ is homogeneous and if $r>1$, then  $\snd(g)\neq 0$. 
\end{cor}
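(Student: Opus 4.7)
The plan has two cases. For $r=1$ the matrix $M(\ibull;\jbull)$ is the single block $A^1$ of distinct indeterminates, so $g=\det(A^1)$ is homogeneous of degree $i_1$ and thus equals $\topd(g)$.

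For $r>1$ I will Laplace-expand $g=\sum_L C_L\hat C_L$ along the last $i_r$ rows, exactly as in the proof of Lemma~\ref{lemma2}. From that proof each cofactor $\hat C_L$ shares the common format $M(i_1,\dotsc,i_{r-1};j_1,\dotsc,j_{r-2},j_{r-1}+j_r-i_r)$, is nonzero, and so has a common degree $d$; while $C_L$ is (up to sign) an $a_L\times a_L$ minor of $A^r$ for some $a_L$ depending on $L$. Set $a:=\max_L a_L=\min(i_r,j_r)$, so $\deg(g)=a+d$.

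The key step is to pick the distinguished index set $L_0$ consisting of the first $a-1$ elements of $[j_r]$ together with the first $i_r-a+1$ columns of the identity block $I_{i_r,j_{r-1}}$. Then $C_{L_0}$ is, up to sign, the $(a-1)\times(a-1)$ minor of $A^r$ on rows $[i_r-a+2,i_r]$ and columns $[a-1]$, and its leading-diagonal monomial is
\[
  \mu\ =\ A^r_{i_r-a+2,\,1}\,A^r_{i_r-a+3,\,2}\cdots A^r_{i_r,\,a-1}.
\]
I will track the coefficient of $\mu$ in $\snd(g)$. Because $\mu$ has $A^r$-degree $a-1$, it cannot appear in any $C_L\cdot\snd(\hat C_L)$ with $a_L=a$ (whose monomials have $A^r$-degree $a$); and among the $L$ with $a_L=a-1$, matching the row/column pattern of $\mu$ to the identity-column expansion of $C_L$ forces $L=L_0$. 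Hence the coefficient of $\mu$ in $\snd(g)$ equals, up to sign, $\topd(\hat C_{L_0})$, which is nonzero since $\hat C_{L_0}\neq 0$. In the edge case $a=1$ the chosen monomial degenerates to $\mu=1$, and the same argument instead isolates the $A^r$-constant part of $\snd(g)$ as $\pm\topd(\hat C_{L_0})$.

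The main obstacle is verifying admissibility of $L_0$, namely $b:=\min(i_r,j_{r-1})\ge i_r-a+1$. When $a=i_r$ this reduces to $b\ge 1$, which holds for any $r\ge 2$. When $a=j_r<i_r$ one needs $b\ge i_r-j_r+1$, and this is forced by the irreducibility hypothesis on $g$: otherwise the Laplace sum would consist of the single term $L=[j_r]\cup(j_r,j_r+b]$, and $g=C_L\hat C_L$ would factor nontrivially since both factors are nonconstant for $r>1$.
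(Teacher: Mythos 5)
Your proof is correct and follows essentially the same route as the paper's: Laplace expansion of $g$ along the last $i_r$ rows and extraction of the degree-$(\deg g-1)$ part, whose nonvanishing comes from the terms $C_L\cdot\topd(\hat{C}_L)$ with $|L\cap[j_r]|=\min\{i_r,j_r\}-1$. Your explicit tracking of the single monomial $\mu$ and the admissibility check $b\ge i_r-a+1$ merely supply the details that the paper compresses into ``the same arguments as before show that there is no cancellation'' and ``the second sum is nonempty.''
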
 
\begin{proof} 
 If $r=1$, then $i_1=j_1=N$, and $g=\det A^1$ is a homogeneous polynomial. 
 
 Assume that $r>1$. 
 Expand $g$ along the last $i_r$ rows of  $M(\ibull; \jbull)$ as in the proof of necessity in Lemma \ref{lemma2}, 
 \[ 
   g\ =\ \sum_{L\in\binom{[j_r+b]}{i_r}} C_L\, \hat{C}_L\,. 
 \] 
 Recall that $C_L$ is homogeneous of degree $|L\cap[j_r]|$ and that $\hat{C}_L$ is the determinant of a matrix with 
 format~\eqref{Eq:newIndices}, and thus these all have the same degree. 
 As the maximum value for $|L\cap[j_r]|$ is $\min\{i_r,j_r\}$, we have 
 \[ 
   \snd(g)\ =\ \sum_{|L\cap[j_r]|=\min\{i_r,j_r\}} C_L \cdot \snd(\hat{C}_L) 
   \ \ +\ \sum_{|L\cap[j_r]|=\min\{i_r,j_r\}-1} C_L \cdot \topd(\hat{C}_L)\ . 
 \] 
 The same arguments as before show that there is no cancellation in these sums. 
 In particular, the second sum is nonempty and nonzero, which implies that $\snd(g)\neq 0$. 
\end{proof} 

\begin{lemma} \label{lemma3} 
 Let $g$ be the determinant of  $M(\ibull; \jbull)$ and assume that $g$ is nonzero, irreducible, and inhomogeneous. 
 Then $\snd(g)\neq 0$ and $\topd(g)$ is coprime to $\snd(g)$. 
\end{lemma}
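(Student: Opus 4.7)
The nonvanishing $\snd(g)\neq 0$ is immediate from Corollary~\ref{corTop}, since inhomogeneity of $g$ forces $r\geq 2$, so the substance of the lemma is coprimality. The plan is to induct on the length $r$ of $\ibull,\jbull$, with the base $r=1$ vacuous (then $g=\det A^1$ is homogeneous). In the inductive step $r\geq 2$, if $\Upsilon(\ibull;\jbull)=\emptyset$, Lemma~\ref{lemma2} makes $\topd(g)$ irreducible, hence automatically coprime to the strictly lower-degree $\snd(g)$. Otherwise Corollary~\ref{corred} supplies the factorization $\topd(g)=f_0\cdots f_m$ into irreducibles, with $f_t$ involving only the variables of block $M_t$ (and thus coprime to $f_u$ for $u\neq t$). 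It then suffices to show $f_t\nmid\snd(g)$ for each $t$; I would split by $r_t:=s_{t+1}-s_t$.

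When $r_t\geq 2$, the length of $M_t$'s index sequences is at least $2$, so $\det M_t$ is inhomogeneous by Corollary~\ref{corTop} and irreducible since its top component $f_t$ is. Inductively, $\gcd(f_t,\snd(\det M_t))=1$. I would specialize to zero the ``outer'' $*$-blocks $*_1,\dotsc,*_m$ from the iterated Remark~\ref{R:reformulation} decomposition, collapsing $g$ to $\pm\prod_u\det M_u$; extracting degree-$(\deg g-1)$ parts yields
\[
\snd(g)|_{*=0}\;=\;\pm\sum_u\snd(\det M_u)\prod_{u'\neq u}f_{u'}.
\]
Under the assumption $f_t\mid\snd(g)$, this divisibility survives specialization; summands with $u\neq t$ each contain $f_t$ as a factor, so $f_t\mid(\prod_{u'\neq t}f_{u'})\snd(\det M_t)$, and coprimality of $f_t$ with each $f_{u'}$ for $u'\neq t$ then forces $f_t\mid\snd(\det M_t)$, contradicting induction.

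The delicate case is $r_t=1$, where $M_t=A^k$ and $f_t=\det A^k$ is $A^k$-homogeneous of degree $i_k$. Laplace expansion of $g$ along the rows of row block $k$ isolates the top $A^k$-homogeneous piece $g_{i_k}=\pm\det(A^k)\cdot R$, and the leaf conditions $k-1,k\in\Upsilon\cup\{0,r\}$ give the complementary submatrix the block form $\bigl(\begin{smallmatrix}*_{\mathrm{big}}&M_{\mathrm{upper}}\\M_{\mathrm{lower}}&0\end{smallmatrix}\bigr)$ with $M_{\mathrm{upper}}=M(i_1,\dotsc,i_{k-1};j_1,\dotsc,j_{k-1})$ and $M_{\mathrm{lower}}=M(i_{k+1},\dotsc,i_r;j_{k+1},\dotsc,j_r)$; hence $R=\pm\det(M_{\mathrm{upper}})\det(M_{\mathrm{lower}})$, independent of $*_{\mathrm{big}}$. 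Since $f_t$ is $A^k$-homogeneous of degree $i_k$, $f_t\mid\snd(g)$ would force the $A^k$-degree-$<i_k$ component of $\snd(g)$ to vanish, equivalently, $g-g_{i_k}$ would have no degree-$(\deg g-1)$ monomial. I would refute this by exhibiting one: starting from a top-degree permutation $\sigma_{\mathrm{top}}$ and selecting a diagonal $1$-entry of $I_{i_{k+1},j_k}$ (available when $k<r$) or of $I_{i_k,j_{k-1}}$ (when $k>1$) --- at least one such choice exists for $r\geq 2$ --- I would swap two values of $\sigma_{\mathrm{top}}$ to produce $\sigma'$. The resulting term loses one $A^k$-entry and one adjacent $A^{k\pm 1}$-entry while gaining both a constant $1$-entry and a fresh $*$-variable; total degree drops by exactly one and $A^k$-degree by one.

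The main obstacle is the non-cancellation step in this leaf case: I must verify both that the swap produces a nonzero $\sigma'$-term of exactly degree $\deg g-1$ (which hinges on the chosen $I$-entry being a $1$, hence the need for diagonal positions) and that the newly introduced $*$-variable uniquely identifies the resulting monomial in the Leibniz expansion of $g$. The latter reduces to showing that any competing permutation contributing the same monomial is forced to agree with $\sigma'$ everywhere, which in turn requires careful bookkeeping of where $\sigma_{\mathrm{top}}$ places the swapped row in the adjacent block --- whether that block is itself a leaf or part of a larger non-leaf component $M_u$.
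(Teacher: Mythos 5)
Your strategy is essentially the paper's: both arguments reduce to the case where the offending factor $f_t$ of $\topd(g)$ comes from a single square block $A^k$ of indeterminates, and then derive a contradiction by producing a term of degree $\deg(g)-1$ whose $A^k$-content is too small to be divisible by $f_t=\det A^k$. Your case $r_t\geq 2$ (specializing the outer $*$-blocks and comparing degree-$(\delta{-}1)$ parts of $\pm\prod_u\det M_u$) plays the role of the paper's specialization step, which concludes $\snd(\det M_t)=0$ and invokes Corollary~\ref{corTop}; your permutation swap in the leaf case is the Leibniz-level version of the paper's exhibition of cofactor products $C_L\hat C_L$ of degree $\delta-1$ for $L$ obtained from the column set of $M_t$ by a single exchange. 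All the structural claims you make along the way (the block-anti-triangular form of the complementary submatrix, the identity $\snd(g)|_{*=0}=\pm\sum_u\snd(\det M_u)\prod_{u'\neq u}f_{u'}$, the forcing of the $A^k$-degree of every monomial of $hf_t$ to equal $i_k$) check out.

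The one item you flag as the main obstacle --- non-cancellation --- is genuinely required, but it dissolves immediately: each row of $M(\ibull;\jbull)$ contains at most one nonzero constant entry (the single $1$ contributed by its block $I_{i_s,j_{s-1}}$), and each indeterminate occupies a unique matrix position. Hence a permutation contributing a given nonzero monomial is completely determined: on rows where the monomial prescribes an indeterminate the column is forced, and on every remaining row the permutation must select that row's unique $1$. So every monomial in the Leibniz expansion of $g$ occurs with coefficient $\pm1$, and no cancellation is possible anywhere --- in particular your swapped monomial survives into $\snd(g)$, and the bookkeeping about whether the adjacent block is a leaf is unnecessary. The only remaining check is that the swap drops the degree by exactly one, i.e.\ that the entry $\sigma_{\mathrm{top}}$ assigned to the displaced row was an indeterminate rather than a $1$: this is automatic, since in the first case that row's unique $1$ sits in the very column being handed to it (which $\sigma_{\mathrm{top}}$ had assigned to a row of block $k$), and in the second case the displaced row lies in blocks $1,\dotsc,k{-}1$ while the only $1$'s of column block $k{-}1$ lie in row block $k$; likewise the entries the displaced rows land on are $*$-indeterminates by the block structure. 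With these observations your proof is complete.
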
 
\begin{proof} 
 Since $g$ is inhomogeneous, $r> 1$ and $\snd(g)\neq 0$, by Corollary~\ref{corTop}. 
 If $\Upsilon(\ibull; \jbull) =\emptyset$, then $\topd(g)$ is irreducible by Lemma \ref{lemma2} and 
 thus is coprime to $\snd(g)$ as it has greater degree. 
 
 Now suppose that $\Upsilon(\ibull; \jbull)\neq\emptyset$, so that $\topd(g)$ is reducible, and that one of its factors 
 divides $\snd(g)$. 
 We use the notation of Corollary~\ref{corred}. 
 Suppose that for some $t\in\{0,\dotsc,m\}$, we have $\snd(g)=h f_t$, for some polynomial $h$. 
 Here, $f_t=\topd(g_t)$, where $g_t$ is the determinant of the submatrix $\defcolor{M_t}:=M(\ibull; \jbull)_t$ of 
 $M(\ibull; \jbull)$ as defined in~\eqref{Eq:M_t}. 
 
 Suppose that $M_t$ has columns indexed by the interval $[a,b]$ and rows by $[c,d]$, 
 and $n:=b-a+1$ is its size. 
 Let us consider the expansion of $g=\det M(\ibull; \jbull)$ along the rows $[c,d]$ of $M_t$, 
 \begin{equation}\label{Eq:expansion} 
   g\ =\ C_{[a,b]}\hat{C}_{[a,b]}\ +\ 
   \sum_{L\in\binom{[N]}{n}\ ,\ L\neq[a,b]} C_L\hat{C}_L\,. 
 \end{equation} 
 Suppose that $\defcolor{\delta}:=\deg(g)$. 
 By   Corollary~\ref{corred}, only the first term 
 in~\eqref{Eq:expansion} has degree $\delta$. 
 As in Section~\ref{S:MainResult}, let $p_{\delta-1}$ be the homogeneous component of degree $\delta{-}1$ in the polynomial 
 $p$. 
 Then 
 \begin{equation}\label{Eq:SND_expansion} 
   h f_t\ =\ \snd(g)\ =\ 
    \snd(C_{[a,b]}\hat{C}_{[a,b]})\ +\ 
   \sum_{ L\neq[a,b]} \bigl(C_L\hat{C}_L\bigr)_{\delta-1}\,. 
\end{equation} 
 If we specialize the indeterminates not appearing in $C_{[a,b]}\hat{C}_{[a,b]}$ to zero, we obtain 
\[ 
  \overline{h} f_t\ =\ \snd(C_{[a,b]}\hat{C}_{[a,b]})\ =\ 
   \topd(C_{[a,b]})\snd(\hat{C}_{[a,b]})\ +\ \snd(C_{[a,b]})\topd(\hat{C}_{[a,b]})\ , 
\] 
 where \defcolor{$\overline{h}$} is the specialization of $h$. 
 Since $f_t=\topd(C_{[a,b]})$ is irreducible and $\topd(\hat{C}_{[a,b]})\neq 0$, we conclude that $\snd(C_{[a,b]})=0$. 
 Since $C_{[a,b]}=\det M_t$, and it is irreducible (as $\topd(C_{[a,b]})$ is irreducible), 
 Corollary~\ref{corTop} implies that $1{+}s_t=s_{t+1}$ so that $M_t=A^{s_{t+1}}$ is a square matrix of indeterminates. 
 Thus $f_t=\det M_t$ and every term of $f_t$ involves a variable from each column of $M_t$. 
 
 The only variables from rows in $[c,d]$ in terms of $g$ come from $C_{[a,b]}$ and the minors $C_L$ in the sum 
 in~\eqref{Eq:expansion}. 
 Each $C_L$ for $L\neq [a,b]$ is the determinant of a matrix with at least one column not from among $[a,b]$, 
 consequently, there is no term of $C_L$ and hence of $C_L\hat{C}_L$ that involves a variable from each column of $M_t$. 
 This implies that $f_t$ cannot divide the sum of~\eqref{Eq:SND_expansion}, and thus no term in the 
 sum of~\eqref{Eq:expansion} has degree $\delta{-}1$. 
 We will show that the sum of~\eqref{Eq:expansion} has degree $\delta{-}1$, which is a contradiction. 
 This will imply that $\topd(g)$ is coprime to $\snd(g)$ and complete the proof. 
 
 Observe that the matrix $M(\ibull;j\bull)$ has the following block form 
 \[ 
   \left( \begin{matrix} 
                *          &  *  & M(\ibull';\jbull')\\ 
                *          & M_t &          P        \\ 
      M(\ibull'';\jbull'') &  Q  &          0 
       \end{matrix}\right)  \ , 
 \] 
 where $\ibull'=i_1,\dotsc,i_{s_t}$ and $\ibull''=i_{1+s_{t+1}},\dotsc,i_r$, and the same for $\jbull'$ and $\jbull''$. 
 Both $P$ and $Q$ have block structure $\left(\begin{smallmatrix}I&0\\ 0&0\end{smallmatrix}\right)$, where 
 $I=I_{i_{1+s_t},i_{s_t}}$ for $P$ and $I=I_{i_{1+s_{t+1}},i_{s_{t+1}}}$ for $Q$. 
 If $t=0$, then $M(\ibull';\jbull')$ and its rows and columns are omitted, while if $t=m$, then 
 $M(\ibull'';\jbull'')$ and its rows and columns are omitted, but at most one of these occurs, as 
 $m\geq 1$. 
 Note that $\hat{C}_{[a,b]}=\det M(\ibull';\jbull') \cdot \det M(\ibull'';\jbull'')$. 
 
 If $t\neq m$, then $a>1$ and let $L:=\{a{-}1\},(a,b]$. 
 Then $C_L$ is the determinant of the matrix obtained from $M_t$ by replacing its first column of variables with another 
 column of variables, so $\deg C_L=\deg C_{[a,b]}$. 
 Similarly, $\hat{C}_L$ is the product $\det M(\ibull';\jbull')\cdot \det M$, where $M$ is obtained from 
 $M(\ibull'';\jbull'')$ by replacing its last column with the first column of $Q$. 
 This amounts to setting all variables in the last column of $M(\ibull'';\jbull'')$ to zero, except for the first, 
 which is set to 1. 
 This variable was in the block $A^{1+s_{t+1}}$, and by Corollary~\ref{corVars} it appears in 
 $\topd(\det M(\ibull'';\jbull''))$. 
 This implies that the degree of $C_L \hat{C}_L$ is $\delta{-}1$. 
 
 If $t\neq 0$, then $b<N$ and we let $L=[a,b),\{b{+}1\}$. 
 We have $\deg \hat{C}_L=\deg \hat{C}_{[a,b]}$, as they are determinants of matrices of the same format. 
 However, $C_L$ is obtained from $C_{[a,b]}$ by setting all variables in the last column of $M_t$ to zero, except the for 
 the first, which is set to 1. 
 This again implies that the degree of $C_L \hat{C}_L$ is $\delta{-}1$, which 
 shows that the sum of~\eqref{Eq:expansion} has degree $\delta{-}1$, and completes the proof. 
\end{proof} 

\begin{proof}[Proof of Lemma~\ref{lemma:Irr}] 
 Recall that we are considering $\Flndot$. 
 Set $\defcolor{a_1}:=n_1$, $\defcolor{a_t}:=n_t-n_{t-1}$ for $t=2,\ldots r$, and $\defcolor{a_{r+1}}:=n{-}n_r$. 
 Let us augment the coordinates~\eqref{eqnmatrix} to a square matrix by appending 
 $(I_{a_{r+1}}\ \ 0)$ in the remaining rows as follows: 
 \begin{equation}\label{Eq:augmented} 
   \left(\begin{matrix} 
     *&*&\cdots&*&*& I_{a_1}\\ 
     *&*  & \cdots &* & I_{a_2} & 0\vspace{-4pt} \\ 
    \vdots &\vdots &\iddots &\iddots &0 &0\vspace{-6pt}\\ 
     \vdots &* &\iddots &\iddots &0      &0\vspace{-6pt}\\ 
    *&I_{a_{r}}&  0 &\iddots &0 &0 \\ 
     I_{a_{r+1}} &0 &0 &\dots&0 &0 
   \end{matrix}\right)_{n\times n}. 
 \end{equation} 
 For $a\in [n]$, the divisor $D_a$ is given by the $a\times a$ principal minor \defcolor{$f^{(a)}$} of this matrix. 
 Each minor $f^{(a)}$ is nonzero and irreducible as $D_a$ is irreducible. 
 For $a\leq \min\{n_r, n{-}n_1\}$, the $a\times a$ principal minor is the determinant of the first $a$ rows and $a$ columns 
 of~\eqref{Eq:augmented}, and thus has the form~\eqref{eqnmatrix}. 
 If $\min\{n_r, n-n_1\}<a\leq n$, then the matrix formed by the first $a$ rows and $a$ columns of~\eqref{Eq:augmented} 
 does not have this form. 
 When $n{-}n_1<a$, its last $a+n_1-n$ columns have an identity matrix in the first  $a+n_1-n$ rows and 0s elsewhere, 
 and when $n_r<a$, its last $a-n_r$ rows have an identity matrix in the first $a-n_r$ columns and 0s elsewhere. 
 
 In the first case, removing the first $a+n_1-n$ rows and last $a+n_1-n$ columns does not change the determinant, and in 
 the second case, removing the first $a-n_r$ columns and $a-n_r$ columns does not change the determinant. 
 After these removals, we are left with a matrix having the form~\eqref{eqnmatrix}. 
 Hence, by Lemma \ref{lemma1} and Corollary \ref{corred}, every polynomial $\topd(f^{(a)})$ is either irreducible or a 
 product of distinct irreducible polynomials, and hence is square-free. 
 By Lemma \ref{lemma3},  $\topd(f^{(a)})$ and $\snd(f^{(a)})$ are coprime whenever $f^{(a)}$ is not homogeneous (in which 
 case $\snd(f^{(a)})\neq 0$). 
 This proves statement (1) of Lemma~\ref{lemma:Irr}. 
 
 For statement (2), let us first consider the irreducible factorization of $\topd(f^{(a)})$ for $a\in[n{-}1]$. 
 Let \defcolor{$M^{(a)}$} be the principal $a\times a$ submatrix of~\eqref{Eq:augmented}. 
 By Corollary~\ref{corred}  and Remark~\ref{R:reformulation}, the factorization of $\topd(f^{(a)})$ is determined by 
 decompositions of $M^{(a)}$ as in~\eqref{Eq:Block}. 
 That is, by the rows of $M^{(a)}$ whose $2\times 2$ block along the anti-diagonal is 
 $\left(\begin{smallmatrix}*&*\\ *&1\end{smallmatrix}\right)$. 
 From the form of~\eqref{Eq:augmented} this occurs when the northwest 1 of some $I_{a_s}$ is in the indicated position. 
 In this case, $a+a_s=n$ and it occurs in row $n_s+1$ and column $a-n_s+1$. 
 
 Thus each row $n_s$ giving the block structure of~\eqref{Eq:augmented} will contribute to the factorization of a unique 
 $\topd(f^{(a)})$, namely when $a=n-a_s$. 
 Suppose that 
 \[ 
   \topd(f^{(a)})\ =\ f^{(a)}_0\cdot f^{(a)}_1 \dotsb f^{(a)}_{m_a} 
 \] 
 is the irreducible factorization of $\topd(f^{(a)})$. 
 Here, $m_a$ is the number of indices $s$ such that $a+a_s=n$ and $\defcolor{f^{(a)}_i}:=\topd(\det(M^{(a)}_i))$, where 
 \defcolor{$M^{(a)}_i$} is the corresponding submatrix of $M^{(a)}$. 
 These matrices $M^{(a)}_0,\dotsc,M^{(a)}_{m_a}$ lie along the anti-diagonal of $M^{(a)}$ between adjacent rows 
 $n_s,n_{s'}$ such that $a_s=a_{a'}=n-a$ (or row 1 when $i=0$ or row $a$ when $i=m_a$). 
 
 Statement (2) follows from the claim that if $a\neq b$, then for all $i,j$, $f^{(a)}_i\neq f^{(b)}_j$, as these are 
 irreducible. 
 To prove the claim, let $M'$ be the matrix $M^{(a)}_i$, after removing rows and columns coming from $I_{a_1}$ if $a+a_1>n$ 
 and $i=0$ and after removing rows and columns corresponding to $I_{a_{r+1}}$ if $a>n_r$ and $i=m_a$. 
 Then $M'$ has structure as in~\eqref{eqnmatrix} and by Corollary~\ref{corVars} each variable of each anti-diagonal block 
 $A^t$ of $M'$ appears in $f^{(a)}_i$. 
 The claim now follows, as this set of variables is different for distinct $a$ and $i$. 
\end{proof} 

\section*{Acknowledgments} 
The  
 authors would like to  thank  Kwok Wai Chan and Kentaro Hori for helpful discussions. 
C. Li is  supported      by NSFC Grants 11822113, 11831017 and 11771455. 
Sottile's work is supported by grant 636314 from the Simons Foundation. 
 \providecommand{\bysame}{\leavevmode\hbox to3em{\hrulefill}\thinspace}
\providecommand{\MR}{\relax\ifhmode\unskip\space\fi MR }
\providecommand{\MRhref}[2]{%
  \href{http://www.ams.org/mathscinet-getitem?mr=#1}{#2}
}
\providecommand{\href}[2]{#2}

\end{document}